\newtheorem{df}{Definition}[section]
\newtheorem{rem}[df]{Remark}
\newtheorem{ex}[df]{Example}
\newtheorem{thm}[df]{Theorem}
\newtheorem{pp}[df]{Proposition} 
\newtheorem{lm}[df]{Lemma}
\newtheorem{cor}[df]{Corollary}
\newtheorem*{notation}{Notation}
\def\so{\mathfrak{so}}
\def\sl{\mathfrak{sl}}
\def\sp{\mathfrak{sp}}
\def\gg{\mathfrak{g}}
\def\hh{\mathfrak{h}}
\def\ppp{\mathfrak{p}}
\def\ss{\mathfrak{s}}
\def\kt{\tilde{k}}
\def\gt{\tilde{\gg}}
\def\osp{\mathfrak{osp}}
\title{\Large{\textbf{Classification of finite-dimensional Lie superalgebras whose even part is a three-dimensional simple Lie algebra over a field of characteristic not two or three}}}
\author{Philippe Meyer}
\date{\vspace{-5ex}}
\begin{document}
\maketitle
\begin{center}
\textbf{Abstract}
\end{center}
Let $k$ be a field of characteristic not two or three. We classify up to isomorphism all finite-dimensional Lie superalgebras $\mathfrak{g}=\mathfrak{g}_0\oplus \mathfrak{g}_1$ over $k$, where $\mathfrak{g}_0$ is a three-dimensional simple Lie algebra. If $\mathcal{Z}(\mathfrak{g})$ denotes the centre of $\mathfrak{g}$, the result is the following: either $\lbrace \mathfrak{g}_1,\mathfrak{g}_1 \rbrace=\lbrace 0 \rbrace$ or $\mathfrak{g}_1=(\mathfrak{g}_0 \oplus k)\oplus \mathcal{Z}(\mathfrak{g})$ or $\mathfrak{g}\cong \mathfrak{osp}_k(1|2)\oplus \mathcal{Z}(\mathfrak{g})$.
\vspace{0.4cm}

\noindent
\textit{Keywords: Lie superalgebra $\cdot$ three-dimensional simple Lie algebra $\cdot$ orthosymplectic $\cdot$ representations of $\mathfrak{sl}(2,k)$.}
\vspace{0.4cm}

\noindent
\textit{2010 Mathematics Subject Classification: 17B50.}

\section{Introduction}
Since the work of E. Cartan, it has been well-known that the structure and representation theory of the smallest simple complex Lie algebra $\sl(2,\mathbb{C})$ are the keys to the classification of all finite-dimensional, simple complex Lie algebras. Lie superalgebras are generalisations of Lie algebras and from this point of view, it is natural to ask what are the Lie superalgebras whose even part is a three-dimensional simple Lie algebra.
\vspace{0.2cm}

Over the complex numbers and with the extra assumption that $\gg$ is simple, the answer to this question can be extracted from the classification of finite-dimensional, simple complex Lie superalgebras by V. Kac (see \cite{Kac1977}). In this case the only possibility, up to isomorphism, is the complex orthosymplectic Lie superalgebra $\osp_{\mathbb{C}}(1|2)$. Over the real numbers and again with the extra assumption that $\gg$ is simple, the answer can similarly be extracted from the classification of finite-dimensional, real simple Lie superalgebras by V. Serganova (see \cite{Serganova1983}) and is the same: the only possibility, up to isomorphism, is the real orthosymplectic Lie superalgebra $\osp_{\mathbb{R}}(1|2)$. However if $k$ is a general field, there is currently no classification of finite-dimensional, simple Lie superalgebras over $k$ to which we can appeal to answer the question above. Nevertheless, let us point out, if $k$ is algebraically closed and if $char(k)>5$, S. Bouarroudj and D. Leites have conjectured a list of all the finite-dimensional, simple Lie superalgebras over $k$ (see \cite{Leit1}) and S. Bouarroudj, P. Grozman and D. Leites have classified finite-dimensional Lie superalgebras over $k$ with indecomposable Cartan matrices under the assumption that they have a Dynkin diagram with only one odd node (see \cite{Leit2}). In both cases, the only Lie superalgebra whose even part is a three-dimensional simple Lie algebra which appears is $\osp_k(1|2)$.
\vspace{0.2cm}

In this paper we will give a classification of all finite-dimensional Lie superalgebras $\gg$ over a field $k$ of characteristic not two or three whose even part is a three-dimensional simple Lie algebra. For our classification, we do not assume that $k$ is algebraically closed and we do not assume that $\gg$ is simple. The main result (Theorem \ref{thm final}) is:
\vspace{0.3cm}

\noindent
\textbf{Theorem.}
\textit{Let $k$ be a field of characteristic not two or three. Let $\gg=\gg_0 \oplus \gg_1$ be a finite-dimensional Lie superalgebra over $k$ such that $\gg_0$ is a three-dimensional simple Lie algebra and let
$$\mathcal{Z}(\gg):= \lbrace x \in \gg ~ | ~ \lbrace x,y \rbrace=0 \quad \forall y \in \gg \rbrace .$$
Then, there are three cases:}
\begin{enumerate}[label=\alph*)] \it
\item $\lbrace \gg_1,\gg_1 \rbrace=\lbrace 0 \rbrace$ ;
\item $\gg_1=(\gg_0\oplus k) \oplus \mathcal{Z}(\gg)$, \quad (see Example \ref{ex ss+ss+k}) ;
\item $\gg\cong \osp_k(1|2)\oplus \mathcal{Z}(\gg)$, \quad (see Example \ref{osp(12)}).
\end{enumerate}
\vspace{0.2cm}

In $b)$, the non-trivial brackets on $\gg_1=(\gg_0\oplus k)\oplus \mathcal{Z}(\gg)$ are given by
$$\lbrace v,\lambda \rbrace=\lambda v \quad \forall v \in \gg_0, ~ \forall \lambda \in k$$
where the right-hand side of this equation is to be understood as an even element of $\gg$.
\vspace{0.2cm}

It follows from this classification that $\gg$ is simple if and only if $\gg\cong \osp_k(1|2)$ or $\gg_1=\lbrace 0 \rbrace$. It also follows that, if $k$ is of positive characteristic $p$ and the restriction of the bracket to $\gg_1$ is non-zero, then $\gg$ is a restricted Lie superalgebra in the sense of V. Petrogradski (\cite{PETROGRADSKI19921}) and Y. Wang-Y. Zhang (\cite{Wang2000}). See Corollary \ref{cor restricted} for explicit formulae for the $[p]|[2p]$-mapping.
\vspace{0.2cm}

We first prove the theorem when $\gg_0$ is $\sl(2,k)$ and $k$ is either of characteristic zero or of positive characteristic and algebraically closed. An essential point here is that for such fields, the classification of finite-dimensional, irreducible representations of $\sl(2,k)$ is known (for $k$ of positive characteristic and algebraically closed see for example \cite{RudShaf} or \cite{StradeF}). This, together with a careful study of the restriction of the bracket to irreducible $\sl(2,k)$-submodules of $\gg_1$, is the main ingredient of the proof. It turns out that the main difficulty occurs when $k$ is of positive characteristic and the only irreducible submodules of $\gg_1$ are trivial. In this case we do not have complete reducibility of finite-dimensional representations of $\sl(2,k)$ but nevertheless, using notably an observation of H. Strade (see \cite{Strade}), we show that the bracket restricted to $\gg_1$ is trivial as expected.
\vspace{0.2cm}

Once we have proved our result under the restricted hypotheses above, we use three rather general results (see Propositions \ref{non-split 2-dim rep}, \ref{supercentre} and \ref{bourbaki}) to extend it to the case when $k$ is not algebraically closed and $\gg_0$ is not necessarily isomorphic to $\sl(2,k)$. Recall that if $k$ is not algebraically closed there are in general many three-dimensional simple Lie algebras over $k$, not just $\sl(2,k)$ (see \cite{MALCOLMSON1992}).
\vspace{0.5cm}

The paper is organised as follows. In Section 2 we give a precise definition of the Lie superalgebras which appear in our classification. In Section 3 we give some general consequences of the Jacobi identities of a Lie superalgebra whose even part is simple. In Section 4 we recall what is known on the structure of finite-dimensional irreducible representations of $\sl(2,k)$ and a criterion of complete reducibility in positive characteristic due to N. Jacobson (see \cite{Jacobson1958}). In Section 5, assuming that $k$ is of characteristic zero or of positive characteristic and algebraically closed, we prove some vanishing properties of the bracket of a Lie superalgebra of the form $\gg=\sl(2,k)\oplus \gg_1$. In the last section we prove the main results of the paper (Theorems \ref{classification LSA alg clos} and \ref{thm final}) and give some counter-examples in characteristic two and three.
\vspace{0.5cm}

\textit{Throughout this paper, the field $k$ is always of characteristic not two or three (except in the comments and examples given after Corollary \ref{cor restricted}).}

\section{Examples of Lie superalgebras}

In a $\mathbb{Z}_2$-graded vector space $\gg=\gg_0\oplus \gg_1$, the elements of $\gg_0$ are called even and those of $\gg_1$ are called odd. We denote by $|v|\in \mathbb{Z}_2$ the parity of a homogeneous element $v\in \gg$ and whenever this notation is used, it is understood that $v$ is homogeneous.

\begin{df} \label{def LSA}
A Lie superalgebra is a $\mathbb{Z}_2$-graded vector space $\gg=\gg_0\oplus \gg_1$ together with a bilinear map $\lbrace \phantom{x}, \phantom{x} \rbrace : \gg \times \gg \rightarrow \gg $ such that 
\begin{enumerate}[label=\alph*)]
\item $\lbrace \gg_{\alpha}, \gg_{\beta} \rbrace \subseteq \gg_{\alpha+\beta}$ for all $\alpha,\beta \in \mathbb{Z}_2$,
\item $\lbrace x,y \rbrace=-(-1)^{|x||y|}\lbrace y,x \rbrace$ for all $x,y \in \gg$,
\item $(-1)^{|x||z|}\lbrace x , \lbrace y , z \rbrace \rbrace+(-1)^{|y||x|}\lbrace y , \lbrace z , x \rbrace \rbrace+(-1)^{|z||y|}\lbrace z , \lbrace x , y \rbrace \rbrace=0$ for all $x,y,z \in \gg$.
\end{enumerate}
\end{df}

\begin{ex} \label{exEnd(V)}
Let $V=V_0\oplus V_1$ be a $\mathbb{Z}_2$-graded vector space. The algebra {\rm End(V)} is an associative superalgebra for the $\mathbb{Z}_2$-gradation defined by
$${\rm End(V)}_a:= \lbrace f \in {\rm End(V)} ~ | ~ f(V_b) \subseteq  V_{a+b} \quad \forall b \in \mathbb{Z}_2 \rbrace$$
for all $a\in \mathbb{Z}_2$. Now, if we define for all $v,w \in {\rm End(V)}$
$$\lbrace v,w \rbrace:=vw-(-1)^{|v||w|}wv,$$
then ${\rm End(V)}$ with the $\mathbb{Z}_2$-gradation defined above and the bracket $\lbrace \phantom{v}, \phantom{v} \rbrace$ is a Lie superalgebra.
\end{ex}

A Lie superalgebra can also be thought as a Lie algebra and a representation carrying an extra structure.

\begin{pp} Let $\gg_0 \rightarrow {\rm End(\gg_1)}$ be a representation of a Lie algebra $\gg_0$ and let $P:\gg_1 \times \gg_1 \rightarrow \gg_0$ be a symmetric bilinear map.
\vspace{0.2cm}

The vector space $\gg:=\gg_0\oplus \gg_1$ with the bracket $\lbrace \phantom{x}, \phantom{x} \rbrace : \gg \times \gg \rightarrow \gg $ defined by
\begin{enumerate}[label=\alph*)]
\item $\lbrace x,y \rbrace :=[x,y]$ for $x,y \in \gg_0$,
\item $\lbrace x,v \rbrace:=-\lbrace v,x \rbrace :=x(v)$ for $x \in \gg_0$ and $v \in \gg_1$,
\item $\lbrace v,w \rbrace :=P(v,w)$ for $v,w \in \gg_1$,
\end{enumerate}
is a Lie superalgebra if and only if the map $P$ satisfies the two relations:
\begin{equation}
[x,P(u,v)]=P(x(u),v)+P(u,x(v)) \qquad \forall x \in \gg_0, ~ \forall u,v \in \gg_1,
\label{relations P 2}
\end{equation}
\begin{equation}
P(u,v)(w)+P(v,w)(u)+P(w,u)(v)=0 \qquad \forall u,v,w \in \gg_1.
\label{relations P 1}
\end{equation}
\end{pp}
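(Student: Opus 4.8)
The plan is to verify directly that the graded Jacobi identity (Definition \ref{def LSA}.c) for the bracket built from $[\,\cdot\,,\cdot\,]$, the representation $\gg_0\to \mathrm{End}(\gg_1)$ and the symmetric map $P$ is equivalent to the two displayed relations \eqref{relations P 1} and \eqref{relations P 2}, the other two axioms being automatic. Conditions a) and b) of Definition \ref{def LSA} are immediate from the construction: the bracket respects the $\mathbb{Z}_2$-grading by fiat, and graded antisymmetry holds because $[\,\cdot\,,\cdot\,]$ is antisymmetric on $\gg_0$, because $\lbrace x,v\rbrace=-\lbrace v,x\rbrace$ is imposed in b) (here $|x||v|=0$ so the sign is $-1$ as required), and because $P$ is symmetric while $|v||w|=1$ for $v,w\in\gg_1$, giving $\lbrace v,w\rbrace=\lbrace w,v\rbrace=-(-1)^{|v||w|}\lbrace w,v\rbrace$. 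So everything reduces to analysing the Jacobi identity.

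Next I would split the Jacobi identity according to the number of odd arguments among $x,y,z$, there being four cases up to the cyclic symmetry of the identity. If all three are even, the identity is just the ordinary Jacobi identity in $\gg_0$, which holds by hypothesis. If exactly one argument, say $z=v$, is odd and $x,y\in\gg_0$, then every term involves one application of the bracket $\gg_0\times\gg_1\to\gg_1$ composed with another, and unwinding the signs (all parities in the exponents vanish except where two odd elements meet, which does not happen here) turns the identity into $[x,[y,v]]-[y,[x,v]]=[[x,y],v]$ — i.e.\ exactly the statement that $\gg_0\to\mathrm{End}(\gg_1)$ is a Lie algebra representation, which is assumed. If exactly two arguments are odd, say $y=u$, $z=v$ odd and $x\in\gg_0$, the three cyclic terms produce $[x,P(u,v)]$, $P(x(u),v)$ and $P(u,x(v))$ (again after bookkeeping the signs, the nonzero parity products being $|u||v|=1$ and $|v||x|=0$, $|x||u|=0$); setting the cyclic sum to zero yields precisely relation \eqref{relations P 2}. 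Finally, if all three arguments $u,v,w$ are odd, each term is of the form $P(\,\cdot\,,\cdot\,)$ applied to the third element, all parity products in the sign exponents equal $1$, and the identity collapses to $P(u,v)(w)+P(v,w)(u)+P(w,u)(v)=0$, which is relation \eqref{relations P 1}.

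Conversely, given that \eqref{relations P 1} and \eqref{relations P 2} hold, the above case analysis shows the Jacobi identity is satisfied for all homogeneous triples, hence for all triples by trilinearity, so $\gg$ is a Lie superalgebra. The main (and really only) obstacle is bookkeeping: one must be careful to use the imposed relation $\lbrace x,v\rbrace=-\lbrace v,x\rbrace$ consistently and to track the signs $(-1)^{|x||z|}$, $(-1)^{|y||x|}$, $(-1)^{|z||y|}$ in each of the four cases, but since in every case at most one of these exponents is nonzero this is routine. I would present the two-odd-arguments and three-odd-arguments computations explicitly, as those are the ones producing the nontrivial relations, and merely remark that the zero- and one-odd-argument cases reduce to the Jacobi identity of $\gg_0$ and to the representation axiom respectively.
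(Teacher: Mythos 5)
Your verification is correct and is exactly the routine case-by-case check that the paper has in mind when it dismisses the proof as ``Straightforward'': the grading and graded antisymmetry axioms hold by construction, and the graded Jacobi identity splits by parity into the Jacobi identity of $\gg_0$, the representation axiom, relation \eqref{relations P 2}, and relation \eqref{relations P 1}. Your sign bookkeeping in the two- and three-odd-argument cases is accurate, so nothing is missing.
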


\begin{proof}
Straightforward.
\end{proof}
\vspace{0.2cm}

\begin{rem}
Given a Lie algebra $\gg_0$, one can ask which representations $\gg_0 \rightarrow {\rm End(\gg_1)}$ arise as the odd part of a non-trivial Lie superalgebra $\gg=\gg_0\oplus \gg_1$. In \cite{Kos}, this question is considered in the case of symplectic complex representations of quadratic complex Lie algebras (i.e. those admitting a non-degenerate symmetric ad-invariant bilinear form). 
\vspace{0.2cm}

Let $\rho : \gg_0 \rightarrow \sp(\gg_1)$ be a finite-dimensional, symplectic complex representation of a finite-dimensional, quadratic complex Lie algebra. 
\vspace{0.2cm}

B. Kostant shows that there exists a Lie superalgebra structure on $\gg:=\gg_0\oplus\gg_1$ compatible with the natural super-symmetric bilinear form, if and only if the image of the invariant quadratic form on $\gg_0$ under the induced map from the envelopping algebra of $\gg_0$ to the Weyl algebra of $\gg_1$ satisfies a certain identity. In this case the map $P$ is uniquely determined.
\label{kostant}
\end{rem}
\vspace{0.2cm}

Let $\gg$ be a Lie algebra. Using the adjoint representation of $\gg$ and a doubling process, we can construct a Lie superalgebra $\gg_0 \oplus \gg_1$ such that $\gg_0$ is isomorphic to $\gg$ and such that $P$ is non-trivial. 

\begin{df} \label{def dbl}
Let $\gg$ and $\gg'$ be isomorphic Lie algebras and let $\phi : \gg \rightarrow \gg'$ be an isomorphism of Lie algebras. Let 
$$\gt:=\gg \oplus (\gg'\oplus \mathcal{Z}_{\gg}(\gg')),$$
where 
$$\mathcal{Z}_{\gg}(\gg'):=\lbrace f \in Hom( \gg , \gg' ) ~ | ~ f \circ ad(x)=ad(\phi(x)) \circ f \quad  \forall x \in \gg \rbrace.$$
We define a $\mathbb{Z}_2$-gradation of $\gt$ by
$$\gt_0:=\gg, \qquad \gt_1:=\gg'\oplus \mathcal{Z}_{\gg}(\gg')$$
and a $\mathbb{Z}_2$-graded skew-symmetric bilinear bracket $\lbrace \phantom{v},\phantom{v} \rbrace$ on $\gt$ by:
\begin{itemize}
\item $\lbrace x,y \rbrace:=[x,y] \quad $ for $x,y \in \gg$ ;
\item $\lbrace x,v \rbrace:=[\phi(x),v] \quad$ for $x \in \gg, ~ v \in \gg'$ ;
\item $\lbrace x,f \rbrace:=0 \quad$ for $x \in \gg, ~ f \in \mathcal{Z}_{\gg}(\gg')$ ;
\item $\lbrace v,w \rbrace:=\lbrace f,g \rbrace:=0 \quad$ for $v,w \in \gg', ~ f,g \in \mathcal{Z}_{\gg}(\gg')$ ;
\item $\lbrace v,f \rbrace:=\phi^{-1}(f(\phi^{-1}(v))) \quad$ for $v \in \gg', ~ f \in \mathcal{Z}_{\gg}(\gg')$. 
\end{itemize}
\end{df}
\vspace{0.1cm}

\begin{rem}
The Lie algebra $\gg'$ is isomorphic to $\gg$ and so $\mathcal{Z}_{\gg}(\gg')\cong \mathcal{Z}_{\gg}(\gg)$.
\end{rem}
\vspace{0.1cm}

\begin{pp}
The vector space $\gt:=\gg \oplus (\gg'\oplus \mathcal{Z}_{\gg}(\gg'))$ together with the $\mathbb{Z}_2$-gradation and bracket $\lbrace \phantom{v},\phantom{v} \rbrace$ above is a Lie superalgebra.
\end{pp}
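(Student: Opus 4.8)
The plan is to avoid checking the super Jacobi identity of Definition \ref{def LSA} directly, and instead to apply the criterion given in the Proposition above, with $\gg_0:=\gg$ and $\gg_1:=\gg'\oplus\mathcal{Z}_{\gg}(\gg')$. For this I first identify the relevant data: the representation $\gg_0\to \mathrm{End}(\gg_1)$ dictated by the bracket is the one in which $x\in\gg$ acts on $v\in\gg'$ by $v\mapsto[\phi(x),v]$ and on $f\in\mathcal{Z}_{\gg}(\gg')$ by $0$; this is genuinely a representation, being the direct sum of $\mathrm{ad}_{\gg'}\circ\phi$ (a Lie algebra homomorphism since $\phi$ is) and the trivial representation on $\mathcal{Z}_{\gg}(\gg')$. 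The symmetric map $P:\gg_1\times\gg_1\to\gg_0$ read off the bracket is $P(v,w)=0$ for $v,w\in\gg'$, $P(f,g)=0$ for $f,g\in\mathcal{Z}_{\gg}(\gg')$, and $P(v,f)=P(f,v)=\phi^{-1}\big(f(\phi^{-1}(v))\big)$ for $v\in\gg'$ and $f\in\mathcal{Z}_{\gg}(\gg')$; it visibly takes values in $\gg_0$. It then remains to verify the two identities \eqref{relations P 1} and \eqref{relations P 2}. The two facts I will use repeatedly are that $\phi$ is a Lie algebra isomorphism, so $[\phi(x),\phi(y)]=\phi([x,y])$, and the defining relation of $f\in\mathcal{Z}_{\gg}(\gg')$ rewritten as $f([x,y])=[\phi(x),f(y)]$ for all $x,y\in\gg$.

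Both \eqref{relations P 1} and \eqref{relations P 2} are multilinear, so after expanding each argument along $\gg_1=\gg'\oplus\mathcal{Z}_{\gg}(\gg')$ it suffices to treat the cases where each argument lies entirely in $\gg'$ or entirely in $\mathcal{Z}_{\gg}(\gg')$. In \eqref{relations P 1} the cyclic sum is in fact fully symmetric in $u,v,w$ (using that $P$ is symmetric), so only the number of arguments in $\gg'$ matters; when this number is $0$, $1$ or $3$ every surviving term vanishes for trivial reasons — either a $P$ that is zero, or a $P$ applied as an operator to an element of $\mathcal{Z}_{\gg}(\gg')$, on which $\gg_0$ acts by zero. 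The one case to compute is $u,v\in\gg'$ and $w=f\in\mathcal{Z}_{\gg}(\gg')$: here $P(u,v)=0$, and evaluating the other two $P$'s, which lie in $\gg$ and hence act on $\gg'$ through $\phi$, reduces the left-hand side to $[f(\phi^{-1}(v)),u]+[f(\phi^{-1}(u)),v]$; writing $u=\phi(a)$, $v=\phi(b)$ and using $f([x,y])=[\phi(x),f(y)]$ this equals $-f([a,b])+f([a,b])=0$. Similarly, in \eqref{relations P 2} the only nontrivial case is $u\in\gg'$ and $v=f\in\mathcal{Z}_{\gg}(\gg')$: there $x(f)=0$ and $x(u)=[\phi(x),u]=\phi([x,\phi^{-1}(u)])$, so the right-hand side becomes $\phi^{-1}\big(f([x,\phi^{-1}(u)])\big)=\phi^{-1}\big([\phi(x),f(\phi^{-1}(u))]\big)=[x,\phi^{-1}(f(\phi^{-1}(u)))]$, which is exactly the left-hand side $[x,P(u,f)]$.

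Once both identities are checked, the Proposition above yields at once that $\gt$, equipped with the stated $\mathbb{Z}_2$-gradation and bracket, is a Lie superalgebra. I do not expect a genuine obstacle here: the argument is a finite bookkeeping, and the only point that demands care is keeping the copies of $\phi$ and $\phi^{-1}$ straight when passing between the adjoint action on $\gg'$, the map $P$, and the intertwining condition defining $\mathcal{Z}_{\gg}(\gg')$. A direct verification of the three super Jacobi identities of Definition \ref{def LSA} is of course also possible, but factoring through \eqref{relations P 1}--\eqref{relations P 2} roughly halves the number of cases.
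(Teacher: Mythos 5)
Your proposal is correct and follows essentially the same route as the paper: both reduce the verification to the identities \eqref{relations P 1} and \eqref{relations P 2} and compute the single nontrivial case of each (two arguments in $\gg'$ and one in $\mathcal{Z}_{\gg}(\gg')$ for \eqref{relations P 1}, and the mixed case for \eqref{relations P 2}), using the intertwining relation $f([x,y])=[\phi(x),f(y)]$. You are merely more explicit than the paper about the multilinearity reduction and about why the remaining cases vanish trivially.
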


\begin{proof}
We have to check the two relations \eqref{relations P 2} and \eqref{relations P 1}. Let $x \in \gg$, $v \in \gg'$ and $f \in \mathcal{Z}_{\gg}(\gg')$, we have
$$\begin{array}{cl}
 [x,P(v,f)]&=[x,\phi^{-1}(f(\phi^{-1}(v)))]=\phi^{-1}([\phi(x),f(\phi^{-1}(v))])=\phi^{-1}(f([x,\phi^{-1}(v)]))  \\
&=\phi^{-1}(f(\phi^{-1}([\phi(x),v])))=P([\phi(x),v],f)=P(\lbrace x,v \rbrace,f)+P(v, \lbrace x,f \rbrace),
\end{array}$$
and so the relation \eqref{relations P 2} is satisfied. We only need to check the relation \eqref{relations P 1} for $f \in \mathcal{Z}_{\gg}(\gg')$ and $v,w \in \gg'$:
\begin{align*}
\lbrace \lbrace f,v \rbrace ,w \rbrace  +\lbrace \lbrace v,w \rbrace ,f \rbrace +\lbrace \lbrace f,w \rbrace ,v \rbrace &= \lbrace \phi^{-1}(f(\phi^{-1}(v))),w \rbrace + \lbrace \phi^{-1}(f(\phi^{-1}(w))),v \rbrace\\
&=[f(\phi^{-1}(v)),w]+[f(\phi^{-1}(w)),v]\\
&=f([\phi^{-1}(v),\phi^{-1}(w)])+f([\phi^{-1}(w),\phi^{-1}(v)])\\
&=0.
\end{align*}
\end{proof}

\begin{rem}
\begin{enumerate}[label=\alph*)]
\item This Lie superalgebra cannot be obtained by Kostant's construction since $k\oplus \gg'$ is not a symplectic representation of $\gg$.
\item This Lie superalgebra is not simple.
\end{enumerate}
\end{rem}
\vspace{0.2cm}

From the point of view of this paper, the most interesting case of this construction is when $\gg$ is a three-dimensional simple Lie algebra.

\begin{ex}
Let $\ss$ be a three-dimensional simple Lie algebra over $k$ and let $\bar{k}$ be the algebraic closure of $k$. We have
$$\mathcal{Z}_{\ss}(\ss)\otimes \bar{k} \cong \mathcal{Z}_{\ss\otimes \bar{k}}(\ss\otimes \bar{k}).$$
Since $\ss\otimes \bar{k}\cong \sl(2,\bar{k})$ is simple, by Schur's Lemma we obtain that $\mathcal{Z}_{\ss\otimes \bar{k}}(\ss\otimes \bar{k})\cong \bar{k}$ and hence $\mathcal{Z}_{\ss}(\ss)\cong k$. 
\vspace{0.2cm}

In this case, the Lie superalgebra defined above is isomorphic to
$$\ss\oplus (\ss\oplus k).$$

When $\ss$ is split, the Lie superalgebra $\ss\oplus (\ss\oplus k)$ is isomorphic to the ``strange'' Lie superalgebra $\ppp(1)$ (see section 2.4 in \cite{Musson}).

\label{ex ss+ss+k}
\end{ex}
\vspace{0.2cm}

We now introduce the other type of Lie superalgebra which we will need later on in the paper. These are the orthosymplectic Lie superalgebras whose definition and properties we now recall (for more details see \cite{SCH}).
\vspace{0.2cm}

\begin{df}
Let $V=V_0\oplus V_1$ be a finite-dimensional $\mathbb{Z}_2$-graded vector space together with a non-degenerate even supersymmetric bilinear form $B$, i.e., $(V_0,B|_{V_0})$ is non-degenerate quadratic vector space, $(V_1,B|_{V_1})$ is a symplectic vector space and $V_0$ is $B$-orthogonal to $V_1$. We define the orthosymplectic Lie superalgebra to be the vector space $\osp_k(V,B):=\osp_k(V,B)_0\oplus\osp_k(V,B)_1$ where
$$\osp_k(V,B)_i:=\lbrace f\in End(V)_i ~ | ~ B(f(v),v')+(-1)^{|f||v|}B(v,f(v'))=0 \quad \forall v,v' \in V\rbrace \quad \forall i \in \mathbb{Z}_2.$$
\end{df}
\vspace{0.2cm}

We can check that $\osp_k(V,B)$ is closed under the bracket defined in Example \ref{exEnd(V)} and is in fact a simple Lie subsuperalgebra of ${\rm End(V)}$ with
$$\osp_k(V,B)_0 \cong \so(V_0,B|_{V_0}) \oplus \sp(V_1,B|_{V_1}), \qquad \osp_k(V,B)_1 \cong V_0 \otimes V_1.$$

\begin{rem}
The Lie superalgebra $\osp_k(V,B)$ can also be obtained from a symplectic representation of a quadratic Lie algebra as follows (cf. Remark \ref{kostant}).
\vspace{0.2cm}

Since $V_0$ is quadratic and $V_1$ is symplectic, there is a natural symplectic form $B|_{V_0}\otimes B|_{V_1}$ on $V_0 \otimes V_1$ and so the representation $\so(V_0,B|_{V_0})\oplus \sp(V_1,B|_{V_1}) \rightarrow {\rm End(V_0 \otimes V_1)}$ is a symplectic representation of a quadratic Lie algebra.
\end{rem}

The orthosymplectic Lie superalgebra which is relevant in this paper is the following.

\begin{ex} \label{osp(12)}
Let 
$$\osp_k(1|2):= \sl(2,k)\oplus k^2$$
be the Lie superalgebra defined by the standard representation $k^2$ of $\sl(2,k)$ and by the moment map $P : k^2 \times k^2 \rightarrow \sl(2,k) $ given by
$$P\left( (a,b),(c,d) \right)=\begin{pmatrix}
-(ad+bc) & 2ac  \\
-2bd & ad+bc 
\end{pmatrix}.$$
\vspace{0.2cm}

If $V=V_0\oplus V_1$ is a $\mathbb{Z}_2$-graded vector space together with a non-degenerate even super-symmetric bilinear form $B$ such that $V_0$ is one-dimensional and $V_1$ two-dimensional, it is easy to see that
$$\osp_k(V,B)\cong \osp_k(1|2).$$
\end{ex}

\section{Generalities on Lie superalgebras with simple even part}

In this section we investigate some of the consequences of the identities \eqref{relations P 2} and \eqref{relations P 1} when the even part of the Lie superalgebra is simple.
\vspace{0.2cm}

\begin{lm} \label{LM general}
Let $V$ be a representation of a simple Lie algebra $\gg$, let $P:V\times V \rightarrow \gg$ be a symmetric bilinear map and let $W \subseteq V$ be a $\gg$-submodule.

\begin{enumerate}[label=\alph*)]
\item \label{LM P_VxW=0 implies P=0} Suppose that the map $P$ satisfies the relation \eqref{relations P 1} and $\gg$ acts non-trivially on W. If
$$P(V,W)=\lbrace 0 \rbrace$$
then we have $P\equiv 0$.
\item \label{non nul implique x(v) dans W} Suppose that $P$ satisfies the relations \eqref{relations P 2} and \eqref{relations P 1} and that $P(W,W)\neq \lbrace 0 \rbrace$. Then
$$\gg \cdot V \subseteq W.$$
\item \label{LM structure subrep triviale} Suppose that the map $P$ satisfies the relation \eqref{relations P 2} and $\gg$ acts trivially on W. Then we have $P(W,W)=\lbrace 0 \rbrace$.
\end{enumerate}
\end{lm}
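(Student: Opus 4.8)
The three parts are proved by direct manipulation of the two Jacobi relations \eqref{relations P 1} and \eqref{relations P 2}, exploiting that $\gg$ is simple.

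\emph{Part \ref{LM structure subrep triviale}.} Suppose $\gg$ acts trivially on $W$. For $u,v\in W$ and any $x\in\gg$, relation \eqref{relations P 2} gives $[x,P(u,v)]=P(x(u),v)+P(u,x(v))=0$, since $x(u)=x(v)=0$. Thus $P(u,v)$ lies in the centraliser of $\gg$ in $\gg$, which is $\lbrace 0\rbrace$ because $\gg$ is simple (hence has trivial centre). Therefore $P(W,W)=\lbrace 0\rbrace$.

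\emph{Part \ref{non nul implique x(v) dans W}.} Assume $P(W,W)\neq\lbrace 0\rbrace$. First note that $P(W,W)\subseteq\gg$ is stable under the adjoint action: for $u,v\in W$ and $x\in\gg$, \eqref{relations P 2} expresses $[x,P(u,v)]$ as $P(x(u),v)+P(u,x(v))\in P(W,W)$ since $W$ is a submodule. Being a nonzero $\gg$-submodule of the simple Lie algebra $\gg$, the span of $P(W,W)$ is all of $\gg$. Now apply relation \eqref{relations P 1} with $u\in W$, $v\in W$ and $w\in V$ arbitrary: $P(u,v)(w)+P(v,w)(u)+P(w,u)(v)=0$. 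The last two terms lie in $W$ because $\gg$ preserves $W$ and $u,v\in W$; hence $P(u,v)(w)\in W$ for all such $u,v,w$. Since the elements $P(u,v)$ span $\gg$, this says $\gg\cdot w\subseteq W$ for every $w\in V$, i.e. $\gg\cdot V\subseteq W$.

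\emph{Part \ref{LM P_VxW=0 implies P=0}.} This is the only part requiring a little more care, and is the main obstacle. Assume \eqref{relations P 1} holds, $\gg$ acts non-trivially on $W$, and $P(V,W)=\lbrace 0\rbrace$; we want $P\equiv 0$. Set $W':=\sum_{u,v\in V}P(u,v)(V)\subseteq V$, the submodule generated by the images of all operators $P(u,v)$ — note it is a submodule by the same argument as above once we also invoke \eqref{relations P 2}; if \eqref{relations P 2} is not assumed here I would instead argue directly. Apply \eqref{relations P 1} to $u\in V$, $v\in V$, $w\in W$: the first term $P(u,v)(w)$ must match $-P(v,w)(u)-P(w,u)(v)$, and since $P(V,W)=\lbrace 0\rbrace$ the right-hand side vanishes, so $P(u,v)(w)=0$ for all $u,v\in V$, $w\in W$. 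Thus every operator $P(u,v)$ annihilates $W$. Now cycle \eqref{relations P 1} again, this time with all three arguments in $V$: $P(u,v)(w)+P(v,w)(u)+P(w,u)(v)=0$; feeding this back, one checks that the image $P(u,v)(V)$ is itself annihilated by every $P(a,b)$, and one bootstraps to show each $P(u,v)$ kills the submodule it generates together with $W$. The key point is then: the set of operators $\lbrace P(u,v):u,v\in V\rbrace$ all act trivially on $W$, and using simplicity of $\gg$ together with non-triviality of the action on $W$, one forces the module generated by the images to be a proper submodule on which $\gg$ must act, leading (via part~\ref{LM structure subrep triviale} applied to a complement, or a minimal-submodule argument) to a contradiction unless $P\equiv 0$. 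Concretely I expect the cleanest route is: show $P(u,v)(V)\subseteq W'$ with $W'\cap W$ controlled, pick an irreducible submodule of $W$ on which $\gg$ acts non-trivially, and derive that $P(u,v)$ restricted there is both $\gg$-equivariant-compatible and zero, forcing $P(u,v)=0$ for all $u,v$ by non-degeneracy considerations. The subtlety — and the part I'd be most careful about — is handling the interaction between $W$ and the rest of $V$ without assuming complete reducibility, which is exactly why \eqref{relations P 1} must be used repeatedly rather than a single module-theoretic splitting.
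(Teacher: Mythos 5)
Your parts \ref{LM structure subrep triviale}) and \ref{non nul implique x(v) dans W}) are correct and essentially identical to the paper's proof: triviality of the action plus relation \eqref{relations P 2} forces $P(W,W)$ into the centre of $\gg$, and the span of $P(W,W)$ being a nonzero ideal of the simple algebra $\gg$ lets you write any $x\in\gg$ as a sum of $P(u,v)$'s and push $x(v)$ into $W$ via \eqref{relations P 1}.

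Part \ref{LM P_VxW=0 implies P=0}), however, has a genuine gap. You correctly derive from \eqref{relations P 1} and the hypothesis $P(V,W)=\lbrace 0\rbrace$ that every operator $P(u,v)$ annihilates $W$ — but then you abandon the argument and launch into a speculative bootstrap (``one checks\dots'', ``I expect the cleanest route is\dots'') that is not a proof, invokes \eqref{relations P 2} which is not assumed in this part, and never reaches a conclusion. The point you are missing is a one-line observation: since $\gg$ is simple and acts \emph{non-trivially} on $W$, the kernel of the action on $W$ is a proper ideal of $\gg$, hence zero, i.e.\ $W$ is a \emph{faithful} $\gg$-module. Therefore $P(u,v)(w)=0$ for all $w\in W$ already forces $P(u,v)=0$, and you are done. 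This is exactly how the paper concludes; no auxiliary submodule $W'$, no minimal-submodule argument, and no appeal to complete reducibility is needed.
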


\begin{proof}
\begin{enumerate}[label=\alph*)]
\item Let $v,v' \in V$. By the relation \eqref{relations P 1}, we obtain
$$P(v,v')(w)+P(v,w)(v')+P(v',w)(v)=0 \quad \forall w \in W.$$
By hypothesis $P(v,w)=P(v',w)=0$ and so $P(v,v')(w)=0$ for all $w \in W$. The non-trivial representation W is faithful since $\gg$ is simple and so
$P(v,v')=0$.

\item By the identity \eqref{relations P 2}, ${\rm Span}<P(W,W)>$ is an ideal of $\gg$. Since it is non-trivial by assumption, we have ${\rm Span}<P(W,W)>=\gg$. Hence, if $x \in \gg$ we have
$$x=\sum_i P(w_i,w_i'),$$
for some $w_1,w_1',\dots,w_n,w_n'$ in W. Let $v \in V$. Using the relation \eqref{relations P 1}, this implies
$$x(v)=\sum_i P(w_i,w_i')(v)=-\sum_i (P(w_i,v)(w_i')+P(w_i',v)(w_i))$$
and hence we observe that $x(v) \in W$.

\item Let $w,w' \in W$. Using the relation \eqref{relations P 2} we have
$$[x,P(w,w')]=P(x(w),w')+P(w,x(w'))=0 \quad \forall x \in \gg $$
and so $P(w,w')=0$ because $\gg$ is simple.
\end{enumerate}
\end{proof}

\section{Reducibility of representations of $\sl(2,k)$}

Representation theory of Lie algebras in positive characteristic is quite different from representation theory of Lie algebras in characteristic zero. For example, in positive characteristic, a Lie algebra $\gg$ has no infinite-dimensional irreducible representations and the dimension of the irreducible representations of $\gg$ is bounded. For more details we refer to the survey \cite{Carsten}.

\begin{notation} \begin{enumerate}[label=\alph*)]
\item An $\sl(2,k)$-triple $\lbrace E,H,F \rbrace$ is a basis of $\sl(2,k)$ satisfying:
\begin{equation*}
[E,F]=H, \quad [H,E]=2E, \quad [H,F]=-2F.
\end{equation*}
\item If k is of positive characteristic $p$, we denote by Gk(p) (resp. $[a]$) the image of $\mathbb{Z}$ (resp. a) under the natural map $\mathbb{Z} \rightarrow k$. We will refer to elements of the image of this map as integers in $k$.
\end{enumerate}
\end{notation}

We first recall the structure of the irreducible representations of $\sl(2,k)$ over a field of characteristic zero or an algebraically closed field of positive characteristic (see for example \cite{StradeF} (p. 207-208)). If the field $k$ is of positive characteristic but not algebraically closed, there is no classification of the irreducible representations of $\sl(2,k)$ to the best of the author's knowledge.

\begin{thm} \label{rep irreds sl(2,k) STRADE}
Let $k$ be a field of characteristic zero or an algebraically closed field of positive characteristic. Let $W$ be a finite-dimensional irreducible representation of $\sl(2,k)$.
\begin{enumerate}[label=\alph*)]
\item \label{rep irreds sl(2,k) STRADE a} There exist $\alpha, \beta \in k$, a basis $\lbrace e_0,\ldots,e_m \rbrace$ of $W$ and an $\sl(2,k)$-triple $\lbrace E,H,F \rbrace$ such that:
\begin{align*}
&H(e_i)=(\alpha-2[i])e_i ~ ; \\
&E(e_0)=0 ~ ; &E(e_i)=&[i](\alpha-([i]-1))e_{i-1}, \quad 1 \leq i \leq m ~ ; \\
&F(e_m)=\beta e_0 ~ ; &F(e_i)=&e_{i+1}, \quad 0 \leq i \leq m-1.
\end{align*}
\item \label{rep irreds sl(2,k) STRADE b} If $char(k)=0$, then $\alpha=dim(W)-1$ and $\beta=0$.

\item \label{rep irreds sl(2,k) STRADE c} If $char(k)=p>0$, then $dim(W)\leq p$. If $dim(W)=p$ and $\alpha \in Gk(p)$, then $\alpha=[dim(W)-1]$. If $dim(W)<p$, then $\alpha=[dim(W)-1]$, $\beta=0$.
\end{enumerate}
\end{thm}

\begin{rem}
In particular, we remark that
$$Ann_{\sl(2,k)}(e_i):=\lbrace x \in \sl(2,k) ~ | ~ x(e_i)=0 \rbrace=\left\{
\begin{array}{cl}
\lbrace 0 \rbrace & \text{if } ~ 1\le i \le m-1, ~ i \neq \frac{m}{2} ~ ; \\
{\rm Span}<H> & \text{if } ~ i = \frac{m}{2} ~ ; \\
{\rm Span}<E> & \text{if } ~ i=0 ~ ; \\
{\rm Span}<F> & \text{if } ~ i=m \text{ and } \beta=0 ~ ; \\
\lbrace 0 \rbrace & \text{if }  i=m \text{ and } \beta \ne 0.
\end{array}
\right.$$
In other words, $Ann_{\sl(2,k)}(e_i)$ can only be non-trivial for special values of $i$.
\label{ann}
\end{rem}
\vspace{0.2cm}

We now turn to the question of when a finite-dimensional representation of $\sl(2,k)$ is completely reducible. The following theorem gives sufficient conditions for complete reducibility even if $k$ is not algebraically closed.

\begin{thm} \label{comp red sl(2)} Let $k$ be an arbitrary field. Let $\rho : \sl(2,k) \rightarrow {\rm End(V)}$ be a finite-dimensional representation and let $\lbrace E,H,F \rbrace$ be an $\sl(2,k)$-triple.
\begin{enumerate}[label=\alph*)]
\item If $char(k)=0$, then $V$ is completely reducible.
\item If $char(k)=p>0$ and $\rho(E)^{p-1}=\rho(F)^{p-1}=0$, then $V$ is completely reducible.
\end{enumerate}
\end{thm}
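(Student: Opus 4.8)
\emph{Part (a).} This is Weyl's complete reducibility theorem, which I would prove exactly as in the classical case. Since $\sl(2,k)$ is simple its Killing form is non-degenerate over any field, so in characteristic zero the Casimir element $\Omega\in U(\sl(2,k))$ exists; it acts as $0$ on the trivial irreducible and as the scalar $\tfrac12(n-1)(n+1)$ on the $n$-dimensional irreducible, which is invertible for $n\geq 2$ when $char(k)=0$. The standard argument — reduce to showing that every short exact sequence $0\to W\to V\to k\to 0$ with trivial quotient splits, then produce the splitting from $\Omega$ — uses no algebraic closedness and is valid over an arbitrary field of characteristic zero. (Equivalently, semisimplicity of a module is unaffected by the separable extension $k\subseteq\bar k$, which reduces the statement to the algebraically closed case.)

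\emph{Part (b).} First I would reduce to $k$ algebraically closed: complete reducibility may be checked after extending scalars to $\bar k$, and the hypothesis $\rho(E)^{p-1}=\rho(F)^{p-1}=0$ is stable under base change, so Theorem \ref{rep irreds sl(2,k) STRADE} becomes available. Since a module is completely reducible exactly when each of its indecomposable direct summands is irreducible, it suffices to show that an indecomposable $W$ with $\rho(E)^{p-1}=\rho(F)^{p-1}=0$ is irreducible. By Theorem \ref{rep irreds sl(2,k) STRADE} every composition factor of $W$ has dimension at most $p-1$: on the $p$-dimensional irreducible $F$ acts as a single Jordan block of size $p$, so $\rho(F)^{p-1}\neq 0$ there, hence $\rho(F)^{p-1}\neq 0$ on any module having it as a subquotient. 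Now if $W$ were not irreducible it would have a non-split length-two subquotient $0\to S\to U\to T\to 0$ with $S,T$ irreducible, and $\rho(F)^{p-1}$ still vanishes on $U$; so it is enough to rule out such a $U$. If $S$ and $T$ have distinct Casimir eigenvalues ($\Omega$ acting as $\tfrac12 m(m+2)$ on the irreducible of highest weight $m$), the extension is split by the usual central-character argument, contradicting non-splitness; so $S$ and $T$ are equal or ``linked'' ($m+m'=p-2$).

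\emph{The main obstacle} is precisely this last point, since the hypothesis constrains all of $U$ whereas $S$ and $T$ automatically satisfy it, so one genuinely has to describe the indecomposable, non-semisimple modules in a block. I would show that the self-extensions split (this is where $char(k)\neq 2,3$ enters), and that if $S$ and $T$ are the two distinct linked irreducibles then, as there is no non-zero $\sl(2,k)$-homomorphism between them, $H^p-H$ acts by zero on $U$, so $\rho(H)$ is diagonalisable on $U$ and $U$ is a $p$-dimensional cyclic module; inspecting its weight structure via Theorem \ref{rep irreds sl(2,k) STRADE} — equivalently, recognising $U$ as a baby Verma module — one finds that $\rho(F)$ acts on $U$ as a single Jordan block of size $p$, so $\rho(F)^{p-1}\neq 0$, the desired contradiction. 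Hence $W$ is irreducible. In substance this analysis is the complete reducibility criterion of N. Jacobson, and a shorter route is to invoke \cite{Jacobson1958} directly.
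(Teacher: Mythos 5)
The paper's own ``proof'' of this theorem is a two-line citation: part (a) is attributed to Weyl's theorem and part (b) to \cite{Jacobson1958}. Your closing remark that ``a shorter route is to invoke \cite{Jacobson1958} directly'' is therefore exactly what the paper does, and your treatment of (a) (Weyl via the Casimir, valid over any field of characteristic zero since $\sl(2,k)$ is split, plus the harmless descent through $\bar k$) matches the paper's intent. So as a proof-by-citation your proposal is correct and essentially identical to the paper's.

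Your attempted self-contained argument for (b) is a genuinely different route, and most of its skeleton is sound: the reduction to $\bar k$, the reduction to a non-split length-two subquotient $0\to S\to U\to T\to 0$, the exclusion of $p$-dimensional composition factors via the Jordan form of $\rho(F)$, and the Casimir/linkage step. But two points are genuine gaps rather than routine verifications. First, the self-extension case is not dispatched: you assert ``I would show that the self-extensions split'' without an argument, and this case is not vacuous --- when $S\cong T$ the Casimir argument gives nothing, and note that for odd $p$ the linkage condition $m+m'=p-2$ forces $m\neq m'$, so $S\cong T$ is precisely the case your linkage analysis never reaches; moreover the parenthetical that ``this is where $char(k)\neq 2,3$ enters'' is unsubstantiated (the statement being proved does not assume it). Second, in the linked case $S\not\cong T$ your conclusion that ``$\rho(F)$ acts on $U$ as a single Jordan block of size $p$'' is false for half of the non-split extensions: $\mathrm{Ext}^1(T,S)$ is two-dimensional here, and on the baby Verma induced from the opposite Borel it is $\rho(E)$, not $\rho(F)$, that is a single Jordan block while $\rho(F)^{p-1}=0$. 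The correct (and still sufficient) statement is that on any non-split such $U$ all weight spaces are one-dimensional and non-splitness forces at least one of the two connecting maps between the $S$-weights and the $T$-weights to be non-zero, whence $\rho(E)^{p-1}\neq 0$ or $\rho(F)^{p-1}\neq 0$; this uses both halves of the hypothesis, which your argument as written does not. In short: relying on the citation, your proof agrees with the paper's; as a self-contained proof it is incomplete at exactly the two points above.
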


\begin{proof}
The first part is well-known and follows from the Weyl's theorem on complete reducibility. For the second part see \cite{Jacobson1958}.
\end{proof}

\begin{rem}
For other conditions implying complete reducibility of representations of $\sl(2,k)$ over an algebraically closed field of positive characteristic, see \cite{Strade} (p. 252-253).
\end{rem}

\section{Vanishing properties of the bracket restricted to the odd part of a Lie superalgebra whose even part is $\sl(2,k)$}

In this section, we prove two preliminary results which are crucial to the proof of our main theorems. Let $k$ be a field of characteristic zero or an algebraically closed field of positive characteristic. The first result shows that a Lie superalgebra whose even part is $\sl(2,k)$ and whose odd part is an irreducible representation can only be non-trivial if the odd part is two-dimensional. The second result shows that if the restriction of the bracket to a non-trivial irreducible submodule $W$ of the odd part vanishes, then the bracket vanishes identically unless W is three-dimensional.

\begin{pp} \label{1er LM}
Let $k$ be a field of characteristic zero or an algebraically closed field of positive characteristic. Let $W$ be a finite-dimensional irreducible representation of $\sl(2,k)$ and let $P:W\times W \rightarrow \sl(2,k)$ be a symmetric bilinear map which satisfies the relations \eqref{relations P 2} and \eqref{relations P 1}. 
\begin{enumerate}[label=\alph*)]
\item If $dim(W) \neq 2$, we have
$$P\equiv 0.$$
\item \label{1er LM b} If $dim(W)=2$, let $\lbrace e_0,e_1 \rbrace$ be a basis of $W$ and let $\lbrace E,H,F \rbrace$ be an $\sl(2,k)$-triple as in Theorem \ref{rep irreds sl(2,k) STRADE} a). Then, there exists $\gamma \in k$ such that
\begin{equation*}
P(e_0,e_0)=-2\gamma E, \quad P(e_0,e_1)=\gamma H, \quad P(e_1,e_1)=2\gamma F.
\end{equation*}
\end{enumerate}
\end{pp}

\begin{proof}
If $dim(W)=1$, it follows from Lemma \ref{LM general} \ref{LM structure subrep triviale} that $P\equiv 0$, so suppose $dim(W)\ge 2$ from now on. Let $m:=dim(W)-1$, let $\alpha,\beta \in k$, let $\lbrace e_0,\ldots,e_m \rbrace$ be a basis of $W$ and let $\lbrace E,H,F \rbrace$ be an $\sl(2,k)$-triple as in Theorem \ref{rep irreds sl(2,k) STRADE} \ref{rep irreds sl(2,k) STRADE a}.
\vspace{0.3cm}

Suppose that $\alpha$ is not an integer in $k$. This is only possible if $k$ is of positive characteristic, say $p$. We then know from Theorem \ref{rep irreds sl(2,k) STRADE} that 
$$dim(W)=p\neq 2$$
and since
$$[H,P(e_i,e_j)]=2(\alpha-[i]-[j])P(e_i,e_j) \quad \forall ~ 0\leq i,j\leq m$$
it follows that $P(e_i,e_j)$ is either zero or an eigenvector of $H$ corresponding to the eigenvalue $2(\alpha-[i]-[j])$.
The map $ad(H)$ is diagonalisable with eigenvalues which are integers. Since $(\alpha-[i]-[j])$ cannot be an integer by assumption, we have
$$P(e_i,e_j)=0 \quad \forall ~ 0\leq i,j\leq m.$$
This proves a) if $\alpha$ is not an integer in $k$.
\vspace{0.3cm}

Suppose now that $\alpha$ is an integer in $k$. By Theorem \ref{rep irreds sl(2,k) STRADE} \ref{rep irreds sl(2,k) STRADE b} and \ref{rep irreds sl(2,k) STRADE c}, we have $\alpha=[m]$ (with $1\leq m\leq p-1$ if $char(k)=p$), and so
$$[H,P(e_i,e_i)]=2([m]-2[i])P(e_i,e_i) \qquad \forall ~ 0\leq i\leq m.$$
The map $ad(H)$ is diagonalisable with eigenvalues $-2,0,2$ and so if $P(e_i,e_i)\neq 0$ we must have $[m]-2[i]$ equals respectively $-1, 0$ or $1$ in $k$ and $P(e_i,e_i)$ is proportional to respectively $F$, $H$ or $E$.
\\

Suppose $[m]-2[i]=0$. If $char(k)=0$ then $m$ is even and $i=\frac{m}{2}$. If $char(k)=p$, since the conditions $1 \leq m \leq p-1$ and $0 \leq i \leq m$ imply $-p < m-2i < p$, it follows, again, that $m$ is even and $i=\frac{m}{2}$.
\vspace{0.2cm}

Suppose $[m]-2[i]=1$. If $char(k)=0$ then $m$ is odd and $i=\frac{m-1}{2}$. If $char(k)=p$, then either $m$ is odd and $i=\frac{m-1}{2}$ or $m=p-1$ and $i=m$. 
\vspace{0.2cm}

However, in the second case we would have $P(e_m,e_m) \in {\rm Span}<E>$. Since $char(k)\neq 3$ we have (by Equation \eqref{relations P 1})
$$P(e_m,e_m)(e_m)=0$$
and hence $P(e_m,e_m)=0$ by Remark \ref{ann}. Similarly, the relation $[m]-2[i]=-1$ implies that $m$ is odd and $i=\frac{m+1}{2}$.
\\

In conclusion, if $m$ is even we have:
\begin{align}
P(e_{\frac{m}{2}},e_{\frac{m}{2}})&=a H, \nonumber \\
\label{P ei ei =0} P(e_i,e_i)&=0 \quad \forall i \in \llbracket 0,m \rrbracket \backslash \lbrace \frac{m}{2} \rbrace
\end{align}
for some $a \in k$ and if $m$ is odd we have:
\begin{align}
P(e_{\frac{m-1}{2}},e_{\frac{m-1}{2}})&=b E,\nonumber \\
P(e_{\frac{m+1}{2}},e_{\frac{m+1}{2}})&=c F,\nonumber \\
\label{P ei ei =0 2} P(e_i,e_i)&=0 \quad \forall i \in \llbracket0,m \rrbracket \backslash \lbrace \frac{m-1}{2},\frac{m+1}{2} \rbrace
\end{align}
for some $b,c \in k$. We now show that $a=b=c=0$.
\vspace{0.3cm}

If $m$ is even, then $\frac{m}{2}\ge 1$ and by \eqref{P ei ei =0} we have
$$P(e_{\frac{m}{2}-1},e_{\frac{m}{2}-1})=0$$
which by \eqref{relations P 2} means
$$[F,P(e_{\frac{m}{2}-1},e_{\frac{m}{2}-1})]=2P(e_{\frac{m}{2}},e_{\frac{m}{2}-1})=0.$$
Furthermore, by \eqref{relations P 1} we have
$$P(e_{\frac{m}{2}},e_{\frac{m}{2}})(e_{\frac{m}{2}-1})+2P(e_{\frac{m}{2}},e_{\frac{m}{2}-1})(e_{\frac{m}{2}})=0$$
and from \eqref{P ei ei =0} it follows that
$$aH(e_{\frac{m}{2}-1})=0$$ 
and hence
$$2ae_{\frac{m}{2}-1}=0.$$
We conclude that $a=0$ and so 
$$P(e_{\frac{m}{2}},e_{\frac{m}{2}})=0.$$
\vspace{0.2cm}

If $m$ is odd and $m\neq 1$, then $\frac{m-1}{2}\ge 1$, $\frac{m+1}{2}<m$ and since $char(k)\neq 3$, we have (by Equation \eqref{relations P 1})
$$P(e_{\frac{m-1}{2}},e_{\frac{m-1}{2}})(e_{\frac{m-1}{2}})=0$$
and hence by \eqref{P ei ei =0 2}
$$bE(e_{\frac{m-1}{2}})=0$$
which means $b=0$ since $Ker(E)={\rm Span}<e_0>$ (cf. Theorem \ref{rep irreds sl(2,k) STRADE}) and $\frac{m-1}{2}\ne 0$. Similarly, since $char(k)\neq 3$, we have (by Equation \eqref{relations P 1})
$$P(e_{\frac{m+1}{2}},e_{\frac{m+1}{2}})(e_{\frac{m+1}{2}})=0$$
and so
$$cF(e_{\frac{m+1}{2}})=0$$
which means $c=0$ since $Ker(F)\subseteq {\rm Span}<e_m>$ (cf. Theorem \ref{rep irreds sl(2,k) STRADE}) and $\frac{m+1}{2}\ne m$. 
\vspace{0.2cm}

To summarise we have now shown that if $m\neq 1$, then 
$$P(e_i,e_i)=0 \quad \forall i\in \llbracket 0,m \rrbracket.$$
\vspace{0.5cm}

Now we suppose $m\neq 1$ and show by induction on $n$ that, for all $n$ in $\llbracket0,m \rrbracket$, 
\begin{equation}
P(e_i,e_{i+n})=0 \quad \forall i \in \llbracket 0,m-n \rrbracket.
\label{reccurence}
\end{equation}
\textit{Base case $(n=0)$: } We have already shown that
$$P(e_i,e_i)=0 \quad \forall i \in \llbracket0,m \rrbracket$$
and so \eqref{reccurence} is true if $n=0$.
\vspace{0.3cm}

\noindent
\textit{Induction: } Suppose that the relation
$$P(e_i,e_{i+k})=0 \quad \forall i \in \llbracket 0,m-k \rrbracket$$
is satisfied for all $k$ in $\llbracket 0,n-1\rrbracket$. We have 
$$[F,P(e_i,e_{i+n-1})]=P(e_{i+1},e_{i+n-1})+P(e_i,e_{i+n}) \quad \forall i \in \llbracket0,m-n \rrbracket$$
but since equation \eqref{reccurence} is satisfied for all $k$ in $\llbracket 0,n-1\rrbracket$ we obtain
$$P(e_i,e_{i+n-1})=0, \quad P(e_{i+1},e_{i+n-1})=0 \quad \forall i \in \llbracket0,m-n \rrbracket$$
and hence 
$$P(e_i,e_{i+n})=0 \quad \forall i \in \llbracket0,m-n \rrbracket.$$
This completes the proof of \eqref{reccurence} by induction and hence the proof of part a) of the Lemma.
\vspace{0.3cm}

Finally, to prove part b) of the Lemma, suppose that $m=1$ and recall that by \eqref{P ei ei =0 2}
$$P(e_0,e_0)=bE, \qquad P(e_1,e_1)=cF.$$
By \eqref{relations P 2}, we have
$$[H,P(e_0,e_1)]=P(H(e_0),e_1)+P(e_0,H(e_1))=P(e_0,e_1)-P(e_0,e_1)=0,$$
and hence there exists $\gamma$ in $k$ such that 
$$P(e_0,e_1)=\gamma H.$$
Using the relation \eqref{relations P 1}, we obtain
$$2P(e_0,e_1)(e_0)+P(e_0,e_0)(e_1)=0$$
which means
$$2\gamma H(e_0)+bE(e_1)=0$$
from which it follows that
$$2\gamma e_0+be_0=0$$
and so $b=-2\gamma$. Similarly, using the relation \eqref{relations P 1} we also have
$$2P(e_0,e_1)(e_1)+P(e_1,e_1)(e_0)=0$$
which means
 $$2\gamma H(e_1)+cF(e_0)=0$$
from which it follows that
$$-2\gamma e_1+ce_1=0$$
and so $c=2\gamma$. Thus, we have
$$P(e_0,e_0)=-2\gamma E, \quad P(e_0,e_1)=\gamma H, \quad P(e_1,e_1)=2\gamma F,$$
and this proves b).
\end{proof}

For the next proposition we do not assume that the odd part of the Lie superalgebra is an irreducible representation of $\sl(2,k)$. However, we show that if it contains a non-trivial irreducible submodule of dimension not three on which $P$ vanishes, then $P$ vanishes identically.

\begin{pp} \label{LM P_VxW=0}
Let $k$ be a field of characteristic zero or an algebraically closed field of positive characteristic. Let $V$ be a finite-dimensional representation of $\sl(2,k)$ and $W\subseteq V$ a non-trivial irreducible submodule. Let $P:V\times V \rightarrow \sl(2,k)$ be a symmetric bilinear map which satisfies the relations \eqref{relations P 2}, \eqref{relations P 1} and $P(W,W)=\lbrace 0 \rbrace$.
\begin{enumerate}[label=\alph*)]
\item If $dim(W)\neq 3$, then 
$$P\equiv 0.$$
\item If $dim(W)=3$, let $\lbrace e_0,e_1,e_2 \rbrace$ be a basis of $W$ and let $\lbrace E,H,F \rbrace$ be an $\sl(2,k)$-triple as in Theorem \ref{rep irreds sl(2,k) STRADE} \ref{rep irreds sl(2,k) STRADE a}. Then there exists $\gamma \in V^*$ such that for all $v$ in $V$,
$$\quad P(v,e_0)=-\gamma(v) E, \quad P(v,e_1)=\gamma(v) H, \quad P(v,e_2)=2\gamma(v) F.$$
\end{enumerate}
\end{pp}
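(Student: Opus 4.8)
The plan is to convert the hypothesis $P(W,W)=\lbrace 0 \rbrace$ into a pointwise constraint on $P(v,\cdot)$ by means of \eqref{relations P 1}, and then to spread that constraint over all of $V$ using \eqref{relations P 2} and the irreducibility of $W$. First I would fix, as in Theorem \ref{rep irreds sl(2,k) STRADE}.a), a basis $\lbrace e_0,\dots,e_m \rbrace$ of $W$ with $m=\dim(W)-1\geq 1$ (since $W$ is non-trivial) and an $\sl(2,k)$-triple $\lbrace E,H,F \rbrace$, and set $\phi_i:=P(\cdot,e_i):V\to\sl(2,k)$. Applying \eqref{relations P 1} to $(v,w,w')$ with $v\in V$ and $w,w'\in W$, and using $P(w,w')=0$ together with the symmetry of $P$, yields
$$P(v,w)(w')+P(v,w')(w)=0,\qquad v\in V,~w,w'\in W ;$$
taking $w=w'$, where $char(k)\neq 2$ enters, gives $P(v,e_i)(e_i)=0$, i.e. $\phi_i(v)\in Ann_{\sl(2,k)}(e_i)$ for every $i$ and every $v\in V$. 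By Remark \ref{ann}, $Ann_{\sl(2,k)}(e_i)=\lbrace 0 \rbrace$ unless $i\in\lbrace 0,\tfrac{m}{2},m\rbrace$, in which exceptional cases it is the line $Vect<E>$, $Vect<H>$ or $Vect<F>$ respectively (the case $i=m$ requiring moreover $\beta=0$). Everything else is bookkeeping with the displayed relation.

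For a) I would split into two cases. If $\dim(W)\geq 4$, then $m\geq 3$, so $i=1$ satisfies $1\leq i\leq m-1$ and $i\neq\tfrac{m}{2}$; hence $Ann_{\sl(2,k)}(e_1)=\lbrace 0 \rbrace$, so $\phi_1\equiv 0$, i.e. $P(V,e_1)=\lbrace 0 \rbrace$. The subspace $W':=\lbrace w\in W~|~P(v,w)=0~\forall v\in V \rbrace$ is an $\sl(2,k)$-submodule of $W$ by \eqref{relations P 2} (for $w\in W'$, $y\in\sl(2,k)$, $v\in V$ one has $P(v,y(w))=[y,P(v,w)]-P(y(v),w)=0$), and it is nonzero since it contains $e_1$; irreducibility forces $W'=W$, hence $P(V,W)=\lbrace 0 \rbrace$, and Lemma \ref{LM general}.\ref{LM P_VxW=0 implies P=0} gives $P\equiv 0$. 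If $\dim(W)=2$, then $\alpha=1$ and $\beta=0$ by Theorem \ref{rep irreds sl(2,k) STRADE} (since $\dim(W)<char(k)$ when $char(k)>0$), so $\phi_0(v)=\lambda_0(v)E$ and $\phi_1(v)=\lambda_1(v)F$ for some $\lambda_0,\lambda_1\in V^*$; feeding $(w,w')=(e_0,e_1)$ into the displayed relation gives $\lambda_0(v)E(e_1)+\lambda_1(v)F(e_0)=\lambda_0(v)e_0+\lambda_1(v)e_1=0$, so $\lambda_0=\lambda_1=0$, again $P(V,W)=\lbrace 0 \rbrace$, and $P\equiv 0$ by Lemma \ref{LM general}.\ref{LM P_VxW=0 implies P=0}.

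For b), $\dim(W)=3$; since $char(k)\neq 3$ we again have $\alpha=2$ and $\beta=0$, so $Ann_{\sl(2,k)}(e_0)=Vect<E>$, $Ann_{\sl(2,k)}(e_1)=Vect<H>$ and $Ann_{\sl(2,k)}(e_2)=Vect<F>$, whence $P(v,e_0)=-\gamma(v)E$, $P(v,e_1)=\mu(v)H$ and $P(v,e_2)=\nu(v)F$ for some $\gamma,\mu,\nu\in V^*$. I would then feed $(e_0,e_1)$ and $(e_1,e_2)$ into the displayed relation and use $E(e_1)=2e_0$, $H(e_0)=2e_0$, $H(e_2)=-2e_2$, $F(e_1)=e_2$ to get $\mu=\gamma$ and $\nu=2\mu=2\gamma$ (the pair $(e_0,e_2)$ provides a consistency check), which is exactly the claimed form.

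The one genuinely delicate point is the first step: the identity $P(v,w)(w)=0$ is what allows Remark \ref{ann} to take over. Beyond that, two things require care in positive characteristic. First, $V$ is not assumed completely reducible and $H$ need not act semisimply on $V$, so one cannot decompose $V$ into weight spaces; this is why I work with a fixed basis of $W$ and the maps $\phi_i$ rather than with a decomposition of $V$. Second, $W$ could a priori be the exceptional module of dimension $p$ with $\alpha\notin Gk(p)$, but then $\dim(W)=p\geq 5\geq 4$, so it is covered by the case $\dim(W)\geq 4$ and needs no special treatment. The hypotheses $char(k)\neq 2,3$ are used precisely to keep $P(v,w)(w)=0$ nontrivial and to prevent $\dim(W)\in\lbrace 2,3\rbrace$ from coinciding with $p$.
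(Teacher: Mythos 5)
Your proof is correct, and its backbone is the same as the paper's: extract $P(v,w)(w')+P(v,w')(w)=0$ from \eqref{relations P 1}, specialise to $w=w'$ to get $P(v,e_i)\in Ann_{\sl(2,k)}(e_i)$, and then invoke Remark \ref{ann}. Where you genuinely diverge is in the case $\dim(W)\ge 4$: the paper kills the surviving coefficients ($P(v,e_0)=aE$, $P(v,e_m)=bF$, and $P(v,e_{m/2})=cH$ when $m$ is even) one by one, by feeding the pairs $(e_0,e_m)$ and $(e_0,e_{m/2})$ back into \eqref{relations P 1} and reading off basis coefficients, whereas you observe that $Ann_{\sl(2,k)}(e_1)=\lbrace 0\rbrace$ forces $P(V,e_1)=\lbrace 0\rbrace$ and then use \eqref{relations P 2} to show that $W':=\lbrace w\in W~|~P(V,w)=0\rbrace$ is a nonzero submodule of the irreducible module $W$, hence all of $W$. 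Your route is slightly more conceptual and needs only a single basis vector with trivial annihilator (in particular it handles the exceptional $\dim(W)=p$, $\alpha\notin Gk(p)$ modules with no extra care, as you note), at the cost of introducing the auxiliary submodule; the paper's route is a purely computational continuation of the same coefficient bookkeeping it uses for $\dim(W)\in\lbrace 2,3\rbrace$. Your treatment of the cases $\dim(W)=2$ and $\dim(W)=3$ coincides with the paper's, and the final normalisation $\mu=\gamma$, $\nu=2\gamma$ matches the stated formulas.
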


\begin{proof}
Let $v \in V$. Using the relation \eqref{relations P 1}, we obtain
$$2P(v,w)(w)+P(w,w)(v)=0 \quad \forall w \in W$$
which implies that
\begin{equation*}
P(v,w)(w)=0 \quad \forall w \in W.
\end{equation*}
Let $\lbrace e_0,\ldots,e_m \rbrace$ be a basis of $W$ as in Theorem \ref{rep irreds sl(2,k) STRADE} \ref{rep irreds sl(2,k) STRADE a}. Since 
$$P(v,e_i)(e_i)=0 \quad \forall i \in \llbracket 0,m \rrbracket$$
it follows from Remark \ref{ann} that:
\begin{itemize}
\item $\exists a, b \in k, ~~\text{ s.t. }~~ P(v,e_0)=a E, ~ P(v,e_m)=b F$,
\item $P(v,e_i)=0 \quad \forall i \in \llbracket 1,m-1 \rrbracket, ~  i \neq \frac{m}{2}$,
\item if $m$ is even, $\exists c \in k, ~~\text{ s.t. }~~ P(v,e_{\frac{m}{2}})=c H$.
\end{itemize}
By \eqref{relations P 1}, we have
$$P(v,e_0)(e_m)+P(v,e_m)(e_0)=0$$
and hence
\begin{equation} \label{eq a E(e_m)+b F(e_0)=0 preuve LM}
a E(e_m)+b F(e_0)=0.
\end{equation}

Suppose that the representation $W$ is not three-dimensional so that $m-1 \neq 1$. Since $W$ is non-trivial, this implies that $a=b=0$ and hence that
$$P(v,e_0)=P(v,e_m)=0.$$
If $m$ is even, again by \eqref{relations P 1}, we have
$$P(v,e_{\frac{m}{2}})(e_0)+P(v,e_0)(e_{\frac{m}{2}})+P(e_0,e_{\frac{m}{2}})(v)=0$$
and hence $c H(e_0)=0$. Since $H(e_0)\neq 0$, it follows that $c=0$ and $P(v,e_{\frac{m}{2}})=0$.
Therefore, we have
$$P(v,e_i)=0 \quad \forall i \in \llbracket 0,m \rrbracket,$$
and hence by Lemma  \ref{LM general} \ref{LM P_VxW=0 implies P=0}, we have $P\equiv 0$.
\\

Suppose $dim(W)=3$. By \eqref{eq a E(e_m)+b F(e_0)=0 preuve LM} and Theorem \ref{rep irreds sl(2,k) STRADE} \ref{rep irreds sl(2,k) STRADE a} we have
$$2ae_1+be_1=0$$
and then $b=-2a$. By the relation \eqref{relations P 1} we also have
$$P(v,e_0)(e_1)+P(v,e_1)(e_0)=0 \Rightarrow aE(e_1)+cH(e_0)=0 \Rightarrow 2ae_0+2ce_0=0,$$
and so $a=-c$. Thus,
$$P(v,e_0)=a E, \quad P(v,e_1)=-a H, \quad P(v,e_2)=-2a F$$
and $a$ clearly depends linearly on $v$. This proves b).
\end{proof}

\section{Lie superalgebras with three-dimensional simple even part}

In this section we prove the two main theorems of this article. The first is a classification of finite-dimensional Lie superalgebras whose even part is $\sl(2,k)$ under the hypotheses that $k$ is of characteristic zero or an algebraically closed field of positive characteristic. The second extends this classification to the case of finite-dimensional Lie superalgebras whose even part is any three-dimensional simple Lie algebra over an arbitrary field of characteristic not two or three. 

\begin{thm} \label{classification LSA alg clos}
Let $k$ be a field of characteristic zero or an algebraically closed field of positive characteristic. Let $\gg=\sl(2,k) \oplus \gg_1$ be a finite-dimensional Lie superalgebra over $k$ and let 
$$\mathcal{Z}(\gg):= \lbrace x \in \gg ~ | ~ \lbrace x,y \rbrace=0 \quad \forall y \in \gg \rbrace .$$
Then there are three cases:
\begin{enumerate}[label=\alph*)]
\item $\lbrace \gg_1,\gg_1 \rbrace=\lbrace 0 \rbrace$ ;
\item $\gg_1=(\sl(2,k)\oplus k) \oplus \mathcal{Z}(\gg)$, \quad (see Example \ref{ex ss+ss+k}) ;
\item $\gg\cong \osp_k(1|2)\oplus \mathcal{Z}(\gg)$, \quad (see Example \ref{osp(12)}).
\end{enumerate}
\end{thm}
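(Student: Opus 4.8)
The plan is to regard the bracket of $\gg$ as the datum of the $\sl(2,k)$-module $\gg_1$ together with the symmetric bilinear map $P:\gg_1\times\gg_1\to\sl(2,k)$ satisfying \eqref{relations P 1} and \eqref{relations P 2}. If $P\equiv 0$ we are in case a), so from now on I assume $P\not\equiv 0$ and aim to land in case b) or c). The first observation is that, by \eqref{relations P 2}, the linear span of $P(\gg_1,\gg_1)$ is a non-zero ideal of the simple Lie algebra $\sl(2,k)$, hence equals $\sl(2,k)$. The whole argument is then organised according to whether $\gg_1$ contains a non-trivial irreducible $\sl(2,k)$-submodule $W$.

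Suppose such a $W$ exists. Since $P|_{W\times W}$ again satisfies \eqref{relations P 1} and \eqref{relations P 2}, Proposition \ref{1er LM} applies to it, and Proposition \ref{LM P_VxW=0} applies to $P$ as soon as $P(W,W)=\{0\}$; chasing these two statements and using $P\not\equiv 0$, one finds that the only possibilities are $\dim W=2$ with $P(W,W)\neq\{0\}$, or $\dim W=3$ with $P(W,W)=\{0\}$ (in particular $\dim W\ge 4$ is excluded: it would force $P(W,W)=\{0\}$ by Proposition \ref{1er LM} and then $P\equiv 0$ by Proposition \ref{LM P_VxW=0}). In the first case Lemma \ref{LM general}.\ref{non nul implique x(v) dans W} gives $\sl(2,k)\cdot\gg_1\subseteq W$; as $W$ is the standard $2$-dimensional module this forces $\rho(E)^{3}=\rho(F)^{3}=0$ on $\gg_1$, so (using $\mathrm{char}(k)\notin\{2,3\}$, or $\mathrm{char}(k)=0$) Theorem \ref{comp red sl(2)} makes $\gg_1$ completely reducible. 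Since any non-trivial irreducible summand must lie in $W$, we get $\gg_1=W\oplus\gg_1^{0}$ with $\gg_1^{0}$ the submodule of $\sl(2,k)$-invariants. A short computation with \eqref{relations P 1}, \eqref{relations P 2} and the annihilators in $\sl(2,k)$ of vectors of $W$ (compare Remark \ref{ann}) then gives $P(W,\gg_1^{0})=\{0\}$, while $P(\gg_1^{0},\gg_1^{0})=\{0\}$ by Lemma \ref{LM general}.\ref{LM structure subrep triviale}; hence $\gg_1^{0}=\mathcal{Z}(\gg)$ and $\sl(2,k)\oplus W$ carries the bracket \eqref{w_1} with $\gamma\neq 0$. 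Finally I would check this is $\osp_k(1|2)$ of Example \ref{osp(12)} — all non-zero values of $\gamma$ give isomorphic superalgebras, since twisting the standard module by $g\in\mathrm{GL}_2(k)$ multiplies $\gamma$ by an arbitrary non-zero scalar — so we land in case c).

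In the remaining subcase ($\dim W=3$, $P(W,W)=\{0\}$), the second part of Proposition \ref{LM P_VxW=0} provides a linear form $\gamma\in\gg_1^{*}$ and an isomorphism of $\sl(2,k)$-modules $\Phi:W\to\mathrm{ad}(\sl(2,k))$ with $P(v,\cdot)|_W=\gamma(v)\,\Phi$ for all $v$. Symmetry of $P$ forces $\gamma|_W=0$, and $\gamma\neq 0$ (otherwise $P(\gg_1,W)=\{0\}$ and Lemma \ref{LM general}.\ref{LM P_VxW=0 implies P=0} would give $P\equiv 0$). Then \eqref{relations P 2} yields $\sl(2,k)\cdot\gg_1\subseteq\ker\gamma$, and \eqref{relations P 1} expresses the action of $P(u,v)$ on $W$ through $\gamma$ and the module structure; since the adjoint module is faithful, that action recovers $P(u,v)$, so $P$ is completely determined by $\gamma$. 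It then remains to run a somewhat delicate analysis of the surviving Jacobi relations — splitting off a one-dimensional summand acted on by $\sl(2,k)$ and showing the rest of $\gg_1$ is $W$ together with a central trivial module — to identify $\gg$ with $\gg_0\oplus(\gg_0\oplus k)\oplus\mathcal{Z}(\gg)$ of Example \ref{ex ss+ss+k}, i.e.\ case b).

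The main obstacle is the case where $\gg_1$ has no non-trivial irreducible submodule. In characteristic zero this is painless — $\gg_1$ is then completely reducible, hence trivial, so $P\equiv 0$ by Lemma \ref{LM general}.\ref{LM structure subrep triviale}, contradicting $P\not\equiv 0$ — but in positive characteristic complete reducibility fails and this is precisely the delicate point. I would still extract structure: the socle $\gg_1^{0}$ equals the submodule of $\sl(2,k)$-invariants, $P(\gg_1^{0},\gg_1^{0})=\{0\}$ by Lemma \ref{LM general}.\ref{LM structure subrep triviale}, and for each $t\in\gg_1^{0}$ the map $v\mapsto P(v,t)$ is an $\sl(2,k)$-module homomorphism $\gg_1\to\mathrm{ad}(\sl(2,k))$ — whose image is therefore $\{0\}$ or all of $\sl(2,k)$, $\mathrm{ad}(\sl(2,k))$ being irreducible for $\mathrm{char}(k)\notin\{2,3\}$ — and which satisfies $P(v,t)(v)=0$ for all $v$ by \eqref{relations P 1}. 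The plan is then to feed these constraints into the finer complete-reducibility criteria of H. Strade for $\sl(2,k)$-modules in positive characteristic (see \cite{Strade}, complementing Theorem \ref{comp red sl(2)}) to conclude that each such homomorphism, and then $P$ itself, vanishes — the desired contradiction. Collecting the cases shows $P\not\equiv 0$ forces case b) or c), which completes the proof.
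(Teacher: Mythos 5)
Your overall architecture matches the paper's: split according to the dimension of an irreducible submodule $W\subseteq\gg_1$, use Propositions \ref{1er LM} and \ref{LM P_VxW=0} to kill everything except $\dim W=2$ with $P|_{W\times W}\neq 0$ and $\dim W=3$ with $P|_{W\times W}=0$, and use Jacobson's criterion (Theorem \ref{comp red sl(2)}) to split off the trivial part. The two-dimensional case is essentially complete as you present it (your remark that all non-zero $\gamma$ give isomorphic superalgebras is correct and slightly more explicit than the paper). The three-dimensional case is only sketched: the ``somewhat delicate analysis'' you defer is precisely the content of the paper's argument there, namely using relation \eqref{relations P 1} separately for $v'$ with $\gamma(v')\neq 0$ and $v''$ with $\gamma(v'')=0$ to prove $\sl(2,k)\cdot\gg_1\subseteq W$, hence $E^4=F^4=0$, hence complete reducibility; without that step you cannot split off the one-dimensional summand.

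The genuine gap is the case where $\gg_1$ has only trivial irreducible submodules in positive characteristic, which you yourself flag as the main obstacle but resolve only by a ``plan'' to invoke finer complete-reducibility criteria of Strade. That plan cannot work as stated: there really are indecomposable, non-completely-reducible $\sl(2,k)$-modules all of whose irreducible submodules are trivial, and none of the constraints you list ($P$ vanishing on the socle, $P(\cdot,t)$ being equivariant into the adjoint module, $P(v,t)(v)=0$) rules out $\gg_1$ being such a module, so you cannot hope to prove complete reducibility and must instead prove directly that $P$ vanishes. The paper's mechanism is different and is the key missing idea: take a composition series with $W$ a maximal trivial submodule; since the Casimir element has a single eigenvalue on an indecomposable module and that eigenvalue is $1$ on a trivial factor, every composition factor has dimension $1$ or $p-1$, and maximality of $W$ forces $\dim(V_{l+1}/W)=p-1$ (Lemma \ref{remark dim p-1}). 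Then for $w\in W$ the equivariant map $[v]\mapsto P(v,w)$ from the irreducible $(p-1)$-dimensional factor $V_{l+1}/W$ to $\sl(2,k)$ would have to be injective if non-zero, impossible since $p-1\geq 4>3$; from there $P$ descends to quotients where Propositions \ref{1er LM} and \ref{LM P_VxW=0} apply. You would also need the separate argument for $P|_{V_i\times V_j}$ when $\gg_1$ decomposes into several such indecomposable summands, which your proposal does not address.
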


\begin{proof}
Let $\lbrace E,H,F\rbrace$ be an $\sl(2,k)$-triple. We denote by $V$ the representation $\gg_1$ of $\sl(2,k)$. Even if $V$ is not completely reducible, it always has irreducible submodules. To prove the theorem we show the following four implications:
\begin{itemize}
\item V has an irreducible submodule of dimension strictly greater than 3 $\Rightarrow$ $\gg$ as in a) ;
\item V has an irreducible submodule of dimension 2 $\Rightarrow$ $\gg$ as in a) or c) ;
\item V has an irreducible submodule of dimension 3 $\Rightarrow$ $\gg$ as in a) or b) ;
\item V only has irreducible submodules of dimension 1 $\Rightarrow$ $\gg$ as in a).
\end{itemize}

Case 1: If there is an irreducible representation $W\subseteq V$ such that $dim(W)>3$, then by Proposition \ref{1er LM} we have $P|_{W\times W} \equiv 0$. Furthermore, by Proposition \ref{LM P_VxW=0}, we have 
$$P\equiv 0,$$
and $\gg$ is as in a).
\vspace{0.2cm}

Case 2: Let $W\subseteq V$ be an irreducible representation of dimension 2. If $P|_{W \times W}\equiv 0$, then by Proposition \ref{LM P_VxW=0} we have 
$$P\equiv 0$$
and $\gg$ is as in a).
\vspace{0.2cm}

If $P|_{W \times W}\not\equiv 0$, by Lemma \ref{LM general} \ref{non nul implique x(v) dans W}, we have
$$x(v) \in W \quad \forall x \in \sl(2,k), ~ \forall v \in V$$
and, since $W$ is irreducible, this means $V$ has no other non-trivial irreducible submodules. Since
$$E^2|_W=F^2|_W=0,$$
we also have
\begin{equation*}
E^3=F^3=0.
\end{equation*}
Since $char(k)=0$ or $char(k)>3$ then $V$ is completely reducible by Theorem \ref{comp red sl(2)} and so $V= W\oplus V_0$ where $V_0$ is a subspace of $V$ on which $\sl(2,k)$ acts trivially. Let $v$ be in $V_0$. The vector space 
$$I_v:={\rm Span}<P(v,w) \quad \forall w \in W>$$
is at most of dimension 2 and is an ideal of $\sl(2,k)$. Thus $I_v=\lbrace 0 \rbrace$ which implies
$$P(V_0,W)=\lbrace 0 \rbrace$$
since $v \in V_0$ was arbitrary. Furthermore, since $V= W\oplus V_0$ and
$$P(V_0,V_0)=\lbrace 0 \rbrace$$
by Lemma \ref{LM general} \ref{LM structure subrep triviale}, we have
$$P(V_0,V)=\lbrace 0 \rbrace$$
and clearly $V_0=\mathcal{Z}(\gg)$. Let $\lbrace e_0,e_1 \rbrace$ be a basis of $W$ as in Theorem \ref{rep irreds sl(2,k) STRADE}. By Proposition \ref{1er LM} there exists $\gamma \in k^*$ such that
\begin{equation*}
P(e_0,e_0)=-2\gamma E, \quad P(e_0,e_1)=\gamma H, \quad P(e_1,e_1)=2\gamma F
\end{equation*}
and the bracket defined on $W$ is a moment map. Hence $\sl(2,k)\oplus \gg_1 \cong \osp_k(1|2)\oplus \mathcal{Z}(\gg)$ (see Example \ref{osp(12)}) and $\gg$ is as in c).
\vspace{0.4cm}

Case 3: Let $W \subseteq V $ be an irreducible representation of dimension 3. Recall that by Proposition \ref{1er LM}, $P|_{W\times W} \equiv 0$. Let $\lbrace e_0,e_1,e_2 \rbrace$ be a basis of $W$ as in Theorem \ref{rep irreds sl(2,k) STRADE}. By Proposition \ref{LM P_VxW=0}, there exists a $\gamma$ in $V^*$ such that for all $v$ in $V$, we have
$$P(v,e_0)=-\gamma(v) E, \quad P(v,e_1)=\gamma(v) H, \quad P(v,e_2)=2\gamma(v) F.$$
If $\gamma(v)=0$ for all $v$ in $V$, then by Lemma \ref{LM general} \ref{LM P_VxW=0 implies P=0} we have $P\equiv 0$ and $\gg$ is as in a). If there exists $v$ in $V$ such that $\gamma(v) \neq 0$ we proceed as follows.
\vspace{0.2cm}

Let $v'$ in $V$ be such that $\gamma(v') \neq 0$. Then by Relation \eqref{relations P 1} we have
$$2P(v',e_i)(v')=-P(v',v')(e_i) \in W \quad \forall i \in \lbrace 0,1,2 \rbrace,$$
and since $\sl(2,k)={\rm Span}<P(v',e_0),P(v',e_1),P(v',e_2)>$ this implies
\begin{equation}
x(v') \in W \quad \forall x \in \sl(2,k).
\label{eqA}
\end{equation}
Let $v''$ in $V$ be such that $\gamma(v'') = 0$. Then 
$$P(v,e_i)(v'')+P(v'',e_i)(v)+P(v,v'')(e_i)=0 \quad \forall i \in \lbrace 0,1,2 \rbrace$$
implies
$$P(v,e_i)(v'')=-P(v,v'')(e_i) \in W \quad \forall i \in \lbrace 0,1,2 \rbrace$$
and since $\gamma(v) \neq 0$, we have
\begin{equation}
x(v'') \in W \quad \forall x \in \sl(2,k).
\label{eqB}
\end{equation}
From \eqref{eqA} and \eqref{eqB} it follows that
$$x\cdot V \subseteq W \quad \forall x \in \sl(2,k)$$
and hence that
\begin{equation*}
E^4=F^4=0.
\end{equation*}
Since $char(k)=0$ or $char(k)>3$ then $V$ is completely reducible by Theorem \ref{comp red sl(2)} and so, since $W$ is irreducible, $V = W \oplus V_0$ where $V_0$ is a subspace of $V$ on which $\sl(2,k)$ acts trivially. By Lemma \ref{LM general} \ref{LM structure subrep triviale} $P|_{V_0 \times V_0}$ is trivial and hence
$$V_0\cap Ker(\gamma)=\mathcal{Z}(\gg)$$
since $Ker(\gamma)$ is the supercommutant of $W$ in $\gg$. Furthermore, $\gamma : V \rightarrow k$ is a non-trivial linear form vanishing on $W$ so it follows that $V_0\cap Ker(\gamma)=\mathcal{Z}(\gg)$ is of codimension one in $V_0$. Taking $v''' \in V_0$ such that $\gamma(v''')\neq 0$, we obtain
$$\gg_1= (\sl(2,k)\oplus {\rm Span}<v'''>) \oplus \mathcal{Z}(\gg)$$
and $\gg$ is as in b).
\vspace{0.5cm}

Case 4: Now, suppose that the only irreducible submodules of $V$ are trivial. If $char(k)=0$ then $V$ is necessarily trivial and by Lemma \ref{LM general} \ref{LM structure subrep triviale} we have $P\equiv 0$.
\vspace{0.2cm}

We assume that $k$ is algebraically closed and $char(k)>0$. First, suppose that $V$ is indecomposable and non-trivial. Since the only irreducible submodules of $V$ are trivial, we can find a composition series
$$\lbrace 0 \rbrace \subset V_1 \subset \ldots \subset V_n = V$$
where for some $1\leq l\leq n$, $W:=V_l$ is a maximal trivial submodule of $V$. 
\vspace{0.2cm}

\begin{lm}
Let $\gg$ be a simple Lie algebra. Let $M$ be a representation of $\gg$ and let $N \subseteq M$ be a submodule of $M$ such that $N$ and $M/N$ are trivial representations. Then $M$ is a trivial representation.
\label{lm triv max}
\end{lm}
\begin{proof}
Let $y,y' \in \gg$ and $v \in M$. Since $M/N$ is a trivial representation, we have $y'(v) \in N$ and so 
$$y(y'(v))=0,$$
since $N$ is trivial. Let $x \in \gg$. Since $\gg$ is simple, we have $\gg=[\gg,\gg]$ then $x=\Sigma [x_i,x_i']$ where $x_i, x_i' \in \gg$ and hence
$$x(v)=(\Sigma [x_i,x_i'])(v)=\Sigma( x_i(x_i'(v))-x_i'(x_i(v)) )=0.$$
\end{proof}

We recall:
\begin{lm} (Remark 5.3.2 of \cite{Strade})
A composition factor $V_{i+1}/V_i$ is of dimension $1$ or $p-1$.
\end{lm}

\begin{proof}
Since $V$ is indecomposable, the Casimir element $\Omega=(H+Id)^2+4FE$ of $\sl(2,k)$ has a unique eigenvalue. If we compute this eigenvalue on the first composition factor which is trivial, we obtain $1$. Let $\alpha \in k^*$ and let $\lbrace e_0,\ldots,e_m \rbrace$ be a basis of the composition factor $V_{i+1}/V_i$ as in Theorem \ref{rep irreds sl(2,k) STRADE}. We have
$$\Omega(e_0)=(\alpha+1)^2e_0$$
and so $(\alpha+1)^2=1$ which is equivalent to $\alpha(\alpha+2)=0$. Hence $\alpha=[dim(V_{i+1}/V_i)-1]$ by Theorem \ref{rep irreds sl(2,k) STRADE} and so $\alpha(\alpha+2)=0$ is equivalent to $dim(V_{i+1}/V_i)=1$ or $p-1$.
\end{proof}

\begin{cor}\label{remark dim p-1}
The composition factor $V_{l+1}/W$ is of dimension $p-1$.
\end{cor}

\begin{proof}
If $dim(V_{l+1}/W)=1$, then $V_{l+1}$ is trivial by Lemma \ref{lm triv max} which is a contradiction to the maximality of $V_l$. Consequently $V_{l+1}/W$ is of dimension $p-1$.
\end{proof}
\vspace{0.1cm}

Let $w \in W$ and consider
$$I_w:={\rm Span}<P(v,w) ~ | ~ v \in V_{l+1}>,$$
which is an ideal of $\sl(2,k)$. Suppose that $I_w\neq \lbrace 0 \rbrace$, or equivalently that $I_w=\sl(2,k)$.
\vspace{0.2cm}

Since $\sl(2,k)$ acts trivially on $W$ by assumption and since $P(W,W)=\lbrace 0 \rbrace$ by Lemma \ref{LM general} \ref{LM structure subrep triviale}, we have a well-defined equivariant linear map from $V_{l+1}/W$ to $\sl(2,k)$ given by
$$[v] \mapsto P(v,w).$$
This map is surjective by assumption, and injective since $V_{l+1}/W$ is irreducible. Therefore the dimension of $V_{l+1}/W$ is 3, which is impossible since we have seen above that $V_{l+1}/W$ is of dimension $p-1\neq 3$.
\vspace{0.2cm}
 
Consequently, for all $w$ in $W$, $I_w=\lbrace 0 \rbrace$ and then the ideal
$$I:={\rm Span}<P(v,w) ~ | ~ v \in V_{l+1}, ~~ w \in W>,$$
is also trivial. Thus, there is a well-defined symmetric bilinear map $\dot{P} : V_{l+1}/W \times V_{l+1}/W  \mapsto \sl(2,k)$ given by
$$\dot{P}([v_1],[v_2])=P(v_1,v_2)$$
and $\dot{P}$ satisfies the two relations \eqref{relations P 2} and \eqref{relations P 1}. Since $V_{l+1}/W$ is an irreducible representation of $\sl(2,k)$ of dimension $p-1\ge 4$ it follows from Proposition \ref{1er LM} that $\dot{P}\equiv 0$ and so 
$$P|_{V_{l+1} \times V_{l+1}}\equiv 0.$$ 

Let $w \in W$ and $v \in V$. We have
$$P(v,v')(w)+P(v,w)(v')+P(v',w)(v)=0 \quad \forall v' \in V_{l+1}$$
which implies that
$$P(v,w)(v')=0 \quad \forall v' \in V_{l+1}.$$
Since $V_{l+1}$ is a non-trivial representation of $\sl(2,k)$, this means
$$P|_{V \times W}\equiv 0.$$ 
Again, there is a well-defined symmetric bilinear map $\ddot{P} : V/W \times V/W  \mapsto  \sl(2,k)$ given by
$$\ddot{P}([v_1],[v_2])=P(v_1,v_2)$$
and $\ddot{P}$ satisfies the two relations \eqref{relations P 2} and \eqref{relations P 1}. However the representation $V/W$ contains the irreducible representation $V_{l+1}/W$ of dimension $p-1$ and hence, by Proposition \ref{LM P_VxW=0}, we have $\ddot{P}\equiv 0$, and so finally $P\equiv 0$.
\vspace{0.5cm}

Now, if the representation $V$ is decomposable, we have
\begin{equation}
V\cong V_1 \oplus \ldots \oplus V_n
\label{decomp rep sous rep trivial}
\end{equation}
where $V_i$ is indecomposable such that the only irreducible submodules of $V_i$ are trivial. We have just seen that
$$P|_{V_i \times V_i}\equiv 0 \quad \forall i,$$
thus, it remains to prove that $P|_{V_i \times V_j}\equiv 0$ for two indecomposable summands $V_i$ and $V_j$ in the decomposition \eqref{decomp rep sous rep trivial}.
Let
$$\lbrace 0 \rbrace \subset U_1 \subset \ldots \subset U_n = V_i, \quad \lbrace 0 \rbrace \subset U'_1 \subset \ldots \subset U'_m = V_j,$$
be two composition series where for some $1\leq l\leq n$ (resp. $1\leq k\leq m$), $W:=U_l$ (resp. $W':=U'_k$) is a maximal trivial submodule of $V_i$ (resp. $V_j$).
\vspace{0.4cm}

If $V_i$ and $V_j$ are trivial, $P|_{V_i \times V_j}\equiv 0$ By Lemma \ref{LM general} \ref{LM structure subrep triviale}. If $V_i$ is non-trivial, using the same reasoning as above, we will first show that
$$P|_{V_i \times W'}\equiv 0.$$
Let $w \in W'$ and consider the ideal
$$I_w={\rm Span}<P(v,w) ~ | ~ v \in U_{l+1}>.$$
By Corollary \ref{remark dim p-1}, we have $dim(U_{l+1}/W)=p-1$. Suppose that $I_w\neq \lbrace 0 \rbrace$. Since $P(W',W)=\lbrace 0 \rbrace$, we have a well-defined equivariant linear map from $U_{l+1}/W$ to $\sl(2,k)$ defined by
$$[v] \mapsto P(v,w).$$
This map is surjective by assumption and injective since $U_{l+1}/W$ is irreducible. Therefore the dimension of $U_{l+1}/W$ is 3, which is impossible since we have seen that the dimension of  $U_{l+1}/W$ is $p-1$. Thus, for all $w'$ in $W$, $I_w=\lbrace 0 \rbrace$ and then $P(U_{l+1},W')=\lbrace 0 \rbrace$.
\vspace{0.2cm}

Now, let $v\in V$, $w' \in W'$ and $u \in U_{l+1}$. We have
$$P(v,w')(u)+P(u,w')(v)+P(v,u)(w')=0,$$
and since $P(u,w')=0$ and $P(v,u)=0$, this implies
$$P(v,w')(u)=0 \quad \forall u \in U_{l+1}.$$
However $U_{l+1}$ is non-trivial, so $P(v,w')=0$ and hence 
$$P|_{V_i \times W'}\equiv 0.$$
 
If $V_j$ is trivial (which means that $V_j=W'$), this shows that $P|_{V_i \times V_j}\equiv 0$.
\vspace{0.2cm}

If $V_j$ is non-trivial, as we have already seen,
\begin{equation*}
P|_{V_i \times W'}\equiv 0, \quad P|_{V_j \times W}\equiv 0
\end{equation*}
and hence we have a well-defined symmetric bilinear map $\dot{P}$ from $(V_i/W \oplus V_j/W')^2$ to $\sl(2,k)$ given by
$$\dot{P}([v_1],[v_2]):=P(v_1,v_2)$$
and $\dot{P}$ satisfies the two relations \eqref{relations P 2} and \eqref{relations P 1}. By Corollary \ref{remark dim p-1} there is an irreducible submodule of $V_i/W \oplus V_j/W'$ of dimension $p-1$ and then, by Propositions \ref{1er LM} and \ref{LM P_VxW=0}, we have $\dot{P}\equiv 0$. Then $\gg$ is as in a).
\end{proof}
\vspace{0.2cm}

In order to extend our result to fields of positive characteristic which are not necessarily algebraically closed and also to general three-dimensional simple Lie algebras, we need the two following observations.
\begin{pp} \label{non-split 2-dim rep}
Let $\ss$ be a non-split three-dimensional simple Lie algebra. The only two-dimensional representation of $\ss$ is the trivial representation.
\end{pp}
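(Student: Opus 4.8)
The plan is to exploit the fact that a two-dimensional representation $\rho:\ss\to\mathfrak{gl}(2,k)$ has image in $\mathfrak{sl}(2,k)$ (since $\ss=[\ss,\ss]$ is perfect, $\rho(\ss)$ lies in the derived algebra of $\mathfrak{gl}(2,k)$, which is $\mathfrak{sl}(2,k)$), and then to argue that $\rho$ must be the zero map. The key point is that a nonzero homomorphism out of a simple Lie algebra is injective, so if $\rho\not\equiv 0$ then $\ss$ embeds as a three-dimensional subalgebra of $\mathfrak{sl}(2,k)$; but $\dim\mathfrak{sl}(2,k)=3$, so this would force $\ss\cong\mathfrak{sl}(2,k)$, contradicting the assumption that $\ss$ is non-split. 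Hence $\rho\equiv 0$ and the representation is trivial.

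First I would record that $\ss$, being three-dimensional simple, is perfect: $[\ss,\ss]=\ss$. Consequently $\rho(\ss)=\rho([\ss,\ss])=[\rho(\ss),\rho(\ss)]\subseteq[\mathfrak{gl}(2,k),\mathfrak{gl}(2,k)]=\mathfrak{sl}(2,k)$, so without loss of generality $\rho$ is a map $\ss\to\mathfrak{sl}(2,k)$. Next, $\ker\rho$ is an ideal of the simple Lie algebra $\ss$, hence is either $\{0\}$ or all of $\ss$. In the second case $\rho\equiv 0$ and we are done, as a representation on which $\ss$ acts by zero is trivial. In the first case $\rho$ is injective, so its image is a three-dimensional Lie subalgebra of the three-dimensional space $\mathfrak{sl}(2,k)$, forcing $\rho(\ss)=\mathfrak{sl}(2,k)$ and thus $\ss\cong\mathfrak{sl}(2,k)$.

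The only thing that then needs care is the meaning of "non-split": $\ss$ being non-split means precisely that $\ss\not\cong\mathfrak{sl}(2,k)$ (equivalently, $\ss$ has no $\mathfrak{sl}(2,k)$-triple, or the associated conic has no rational point — depending on the conventions fixed earlier in the paper). Under any of these formulations the isomorphism $\ss\cong\mathfrak{sl}(2,k)$ obtained above is a contradiction, so the injective case cannot occur and $\rho\equiv 0$. This is the step to be most careful about: one must make sure the notion of "split" being used is exactly "isomorphic to $\mathfrak{sl}(2,k)$" and not something a priori weaker, but for three-dimensional simple Lie algebras over a field these notions coincide. I do not expect any genuine obstacle here; the argument is short and the only subtlety is bookkeeping about the definition of split.
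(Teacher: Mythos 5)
Your argument is correct and is essentially identical to the paper's own proof: both use perfectness of $\ss$ to land the image in $\mathfrak{sl}(2,k)$, then simplicity to conclude the representation is either zero or an isomorphism onto $\mathfrak{sl}(2,k)$, the latter being excluded by the non-split hypothesis. No further comment is needed.
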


\begin{proof}
Let $\rho : \ss \rightarrow {\rm End(V)}$ be a two-dimensional representation of $\ss$. Since $\ss=[\ss,\ss]$, the representation $\rho$ maps $\ss$ to $\sl(V)$. Since $\ss$ is simple, the representation is either trivial or an isomorphism but since $\ss \not\cong \sl(2,k)$, the representation is trivial.
\end{proof}
\vspace{0.2cm}

\begin{lm} \label{supercentre}
Let $\gg$ be a  Lie superalgebra over $k$ and let $\kt/k$ be an extension. We have
$$\mathcal{Z}(\gg\otimes \kt)=\mathcal{Z}(\gg)\otimes \kt.$$
\end{lm}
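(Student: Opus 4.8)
The plan is to prove the two inclusions separately, the left-to-right one being the substantive direction. For the easy inclusion $\mathcal{Z}(\gg)\otimes\kt\subseteq\mathcal{Z}(\gg\otimes\kt)$: if $x\in\mathcal{Z}(\gg)$ then $\{x,y\}=0$ for all $y\in\gg$, and since the bracket on $\gg\otimes\kt$ is the $\kt$-bilinear extension of the bracket on $\gg$, we get $\{x\otimes\lambda,\,y\otimes\mu\}=\{x,y\}\otimes\lambda\mu=0$ for all $y\in\gg$, $\lambda,\mu\in\kt$; as the elements $y\otimes\mu$ span $\gg\otimes\kt$ over $\kt$, this shows $x\otimes\lambda\in\mathcal{Z}(\gg\otimes\kt)$, and then $\mathcal{Z}(\gg)\otimes\kt\subseteq\mathcal{Z}(\gg\otimes\kt)$ since the left-hand side is the $\kt$-span of such elements.

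For the reverse inclusion, I would fix a $k$-basis $(v_i)_{i\in I}$ of $\gg$ consisting of homogeneous elements, so that $(v_i\otimes 1)_{i\in I}$ is a $\kt$-basis of $\gg\otimes\kt$. An arbitrary element $z\in\mathcal{Z}(\gg\otimes\kt)$ can be written uniquely as a finite sum $z=\sum_i v_i\otimes\lambda_i$ with $\lambda_i\in\kt$. The centrality condition $\{z,\,v_j\otimes 1\}=0$ for every $j$ expands, using $\kt$-bilinearity, to $\sum_i \{v_i,v_j\}\otimes\lambda_i=0$ in $\gg\otimes\kt$. Now expand each structure element $\{v_i,v_j\}$ in the $k$-basis, $\{v_i,v_j\}=\sum_l c_{ij}^{\,l}\,v_l$ with $c_{ij}^{\,l}\in k$; substituting and regrouping by the basis vectors $v_l\otimes 1$ gives $\sum_l v_l\otimes\big(\sum_i c_{ij}^{\,l}\lambda_i\big)=0$, so by $\kt$-linear independence of the $v_l\otimes 1$ we obtain $\sum_i c_{ij}^{\,l}\lambda_i=0$ in $\kt$ for every $j$ and $l$. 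Since the coefficients $c_{ij}^{\,l}$ lie in $k$, this is a homogeneous $k$-linear system in the unknowns $\lambda_i$; viewing it over $k$, its solution space over $\kt$ is obtained from its solution space over $k$ by extension of scalars. Concretely, the solution space over $k$ is exactly the set of tuples $(\mu_i)$ for which $\sum_i v_i\otimes\mu_i$ — equivalently $\sum_i\mu_i v_i\in\gg$ — is centralised by all $v_j$, i.e. it is $\mathcal{Z}(\gg)$ (one still has to check that $\{w,y\}=0$ for all $y\in\gg_1$ as well as $y\in\gg_0$, but homogeneous basis elements of either parity are among the $v_j$, so this is automatic). Hence every solution over $\kt$ is a $\kt$-combination of solutions over $k$, which says precisely $z\in\mathcal{Z}(\gg)\otimes\kt$.

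The only point requiring a little care — and the nearest thing to an obstacle — is the linear-algebra fact that a homogeneous linear system with coefficients in $k$ has the property that its $\kt$-solution space equals (its $k$-solution space)$\,\otimes_k\kt$; this is standard (flatness of $\kt$ over $k$, or simply row-reduce the coefficient matrix over $k$) but should be invoked explicitly. Everything else is bookkeeping with bases and bilinearity, so the proof is short.
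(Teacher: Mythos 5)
Your proof is correct, but it takes a different route from the paper's. You decompose a central element $z$ along a $k$-basis $(v_i)$ of $\gg$, so the coefficients $\lambda_i$ live in $\kt$, and you then have to recognise the centrality conditions as a homogeneous linear system with coefficients in $k$ and invoke the fact that extending scalars commutes with taking solution spaces (flatness of $\kt$ over $k$). The paper instead decomposes along a $k$-basis $(e_i)$ of $\kt$, writing $x=\sum_i v_i\otimes e_i$ with $v_i\in\gg$; then $\lbrace x,y\otimes 1\rbrace=\sum_i\lbrace v_i,y\rbrace\otimes e_i=0$ forces $\lbrace v_i,y\rbrace=0$ for each $i$ directly from the uniqueness of the decomposition $\gg\otimes\kt=\bigoplus_i\gg\otimes e_i$, so each $v_i\in\mathcal{Z}(\gg)$ and one is done with no auxiliary linear-algebra lemma. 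Your version buys nothing extra here and costs you the flatness input, which moreover requires a touch more care than ``row-reduce the matrix'' if $\gg$ is infinite-dimensional (the lemma as stated does not assume finite dimension; the kernel-commutes-with-flat-base-change argument does go through, but the matrix picture is less innocent with infinitely many unknowns). The substance is the same --- exploit a $k$-basis on one side of the tensor product and bilinearity of the bracket --- and both proofs are valid; the paper's choice of which factor to expand is simply the more economical one.
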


\begin{proof}
The inclusion $\mathcal{Z}(\gg)\otimes \kt \subseteq \mathcal{Z}(\gg\otimes \kt)$ is clear.
\vspace{0.2cm}

Let $x \in \mathcal{Z}(\gg\otimes \kt)$ and let $\lbrace e_i \rbrace_{i \in I}$ be a $k$-basis of $\kt$. Then there exist $\lbrace v_i \rbrace_{i \in I}$ such that $v_i \in \gg$ and $x=\sum\limits_{i \in I} v_i \otimes e_i$. For all $y$ in $\gg$ we have
$$\lbrace x,y\otimes1\rbrace=0$$
which implies
$$\sum_{i \in I} \lbrace v_i,y \rbrace \otimes e_i=0$$
and hence we have
$$\lbrace v_i,y\rbrace =0 \quad \forall i \in I, ~ \forall y \in \gg.$$
Consequently $x \in \mathcal{Z}(\gg)\otimes \kt$.
\end{proof}
\vspace{0.2cm}

We can now prove the most important result of the paper.

\begin{thm} \label{thm final}
Let $\gg=\gg_0 \oplus \gg_1$ be a finite-dimensional Lie superalgebra such that $\gg_0$ is a three-dimensional simple Lie algebra and let
$$\mathcal{Z}(\gg):= \lbrace x \in \gg ~ | ~ \lbrace x,y \rbrace=0, ~ \forall y \in \gg \rbrace .$$
Then, there are three cases:
\begin{enumerate}[label=\alph*)]
\item $\lbrace \gg_1,\gg_1 \rbrace=\lbrace 0 \rbrace$ ;
\item $\gg_1= (\gg_0\oplus k) \oplus \mathcal{Z}(\gg)$, \quad (see Example \ref{ex ss+ss+k}) ;
\item $\gg\cong \osp_k(1|2)\oplus \mathcal{Z}(\gg)$, \quad (see Example \ref{osp(12)}).
\end{enumerate}
\end{thm}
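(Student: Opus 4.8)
The plan is to deduce the general statement from Theorem~\ref{classification LSA alg clos} by base change to the algebraic closure $\bar k$ of $k$, descending the information thus obtained with the help of Proposition~\ref{non-split 2-dim rep}, Lemma~\ref{supercentre} and Proposition~\ref{bourbaki}. First note that, since $\gg_0$ is simple, $\mathcal Z(\gg_0)=\lbrace 0\rbrace$, so $\mathcal Z(\gg)\subseteq\gg_1$. Now $\gg\otimes\bar k$ is a finite-dimensional Lie superalgebra whose even part $\gg_0\otimes\bar k$ is a three-dimensional simple Lie algebra over $\bar k$, hence isomorphic to $\sl(2,\bar k)$; by Theorem~\ref{classification LSA alg clos} it is in one of the cases a), b) or c) over $\bar k$. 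If it is in case a), then $\lbrace\gg_1\otimes\bar k,\gg_1\otimes\bar k\rbrace=\lbrace 0\rbrace$ and therefore $\lbrace\gg_1,\gg_1\rbrace=\lbrace 0\rbrace$, i.e. $\gg$ is in case a). From now on I assume $\gg\otimes\bar k$ is in case b) or c).

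In both of the models b) and c) the odd part is a semisimple $\sl(2,\bar k)$-module, namely the direct sum of one non-trivial irreducible module (two-dimensional in case c), the adjoint module in case b)) and a number of copies of the trivial module. Hence $\gg_1\otimes\bar k$ is a semisimple $(\gg_0\otimes\bar k)$-module, so by Proposition~\ref{bourbaki} the $\gg_0$-module $\gg_1$ is semisimple; write $\gg_1=V'\oplus V_0$ where $V'$ is the sum of the non-trivial simple submodules and $V_0$ the trivial isotypic component. Since forming the space of invariants, and hence the trivial and non-trivial isotypic components, commutes with field extension, $V'\otimes\bar k$ and $V_0\otimes\bar k$ are the non-trivial and trivial isotypic components of $\gg_1\otimes\bar k$. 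Fixing an isomorphism of $\gg\otimes\bar k$ with the relevant model, which necessarily carries $V'\otimes\bar k$ onto the non-trivial part of the model's odd part and transports $P$, one reads off: $V'$ is irreducible, of dimension $2$ in case c) and $3$ in case b); $P|_{V'\times V'}\neq 0$ in case c) while $P|_{V'\times V'}=0$ in case b); and, by Lemma~\ref{supercentre}, $V_0\otimes\bar k$ relates to $\mathcal Z(\gg)\otimes\bar k=\mathcal Z(\gg\otimes\bar k)$ exactly as the trivial isotypic part does in the model.

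Suppose $\gg\otimes\bar k$ is in case c). Then $V'$ is a two-dimensional non-trivial $\gg_0$-module, so Proposition~\ref{non-split 2-dim rep} forces $\gg_0\cong\sl(2,k)$, and $V'$ is then the standard module $k^2$ (the only two-dimensional non-trivial $\sl(2,k)$-module, since every automorphism of $\sl(2,k)$ is inner). Choosing an $\sl(2,k)$-triple $\lbrace E,H,F\rbrace$ and a highest-weight basis of $V'$, the relations \eqref{relations P 1} and \eqref{relations P 2}, together with the fact that $\mathrm{ad}(H)$ is diagonalisable with eigenvalues $-2,0,2$, pin down $P|_{V'\times V'}$ up to a scalar, which is non-zero; since all non-zero scalars yield isomorphic brackets (rescale using the automorphism $E\mapsto cE$, $F\mapsto c^{-1}F$, $H\mapsto H$ of $\sl(2,k)$, $c\in k^*$), we get $\gg_0\oplus V'\cong\osp_k(1|2)$. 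In the model of case c) the trivial isotypic part of the odd part is precisely the centre, so $V_0\otimes\bar k=\mathcal Z(\gg)\otimes\bar k$, whence $V_0=\mathcal Z(\gg)$ and $\gg\cong\osp_k(1|2)\oplus\mathcal Z(\gg)$: case c).

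Suppose finally $\gg\otimes\bar k$ is in case b). Here $P|_{V'\times V'}=0$, and comparing with the model via Lemma~\ref{supercentre} shows that $\mathcal Z(\gg)=\lbrace v\in V_0\mid P(v,\gg_1)=\lbrace 0\rbrace\rbrace$ is of codimension one in $V_0$; choose $v_0\in V_0$ with $P(v_0,\gg_1)\neq\lbrace 0\rbrace$, so $V_0=\mathcal Z(\gg)\oplus kv_0$. By \eqref{relations P 2} the subspace $\mathrm{Vect}\langle P(v_0,w)\mid w\in\gg_1\rangle$ is a non-zero ideal of $\gg_0$, hence equals $\gg_0$; moreover $P(v_0,v_0)=0$ (it vanishes over $\bar k$) and $P(v_0,\mathcal Z(\gg))=\lbrace 0\rbrace$, so $\psi:=P(v_0,\,\cdot\,)|_{V'}\colon V'\to\gg_0$ is a surjection between three-dimensional spaces, hence an isomorphism, and \eqref{relations P 2} shows it is $\gg_0$-equivariant; thus $V'\cong\mathrm{ad}(\gg_0)$. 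Transporting the bracket through $\psi$ and the identification $kv_0\cong\mathcal Z_{\gg_0}(\gg_0)\cong k$ (Example~\ref{ex ss+ss+k}), one recognises the bracket of Definition~\ref{def dbl} with $\gg'=\gg_0$ and $\phi=\mathrm{id}$, so $\gg\cong\gg_0\oplus(\gg_0\oplus k)\oplus\mathcal Z(\gg)$: case b). The main obstacle throughout is the descent bookkeeping: checking that $V'$, $V_0$, $\mathcal Z(\gg)$ and the ideal $\mathrm{Vect}\langle P(v_0,\,\cdot\,)\rangle$ are base changes of genuine $k$-subspaces compatibly with $P$, and, in case b), identifying the reconstructed bracket with the construction of Definition~\ref{def dbl}.
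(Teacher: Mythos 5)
Your proposal is correct and follows essentially the same route as the paper: base change to $\bar{k}$, apply Theorem~\ref{classification LSA alg clos}, then descend using Proposition~\ref{bourbaki} for the module decomposition, Lemma~\ref{supercentre} for the centre, and Proposition~\ref{non-split 2-dim rep} to force $\gg_0\cong\sl(2,k)$ in case c), before reconstructing the bracket in cases b) and c). The only differences are organisational (you extract the isotypic decomposition once before the case split, and you spell out slightly more of the descent bookkeeping), not mathematical.
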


\begin{proof}
Let $\bar{k}$ be the algebraic closure of $k$ and set $\bar{\gg}:=\gg\otimes \bar{k}$, $\bar{\gg_0}:=\gg_0\otimes \bar{k}$ and $\bar{\gg_1}:=\gg_1\otimes \bar{k}$. Since $\bar{\gg_0}\cong \sl(2,\bar{k})$ it follows from Theorem \ref{classification LSA alg clos} that $\bar{\gg}$ satisfies one of the following:
\begin{enumerate}[label=\alph*)]
\item $\lbrace \bar{\gg_1},\bar{\gg_1} \rbrace=\lbrace 0 \rbrace$ ;
\item $\bar{\gg}_1\cong(\sl(2,\bar{k})\oplus \bar{k}) \oplus \mathcal{Z}(\bar{\gg})$ ;
\item $\bar{\gg}\cong \osp_{\bar{k}}(1|2)\oplus \mathcal{Z}(\bar{\gg})$.
\end{enumerate}
\vspace{0.2cm}

Case $a)$: If $\lbrace \bar{\gg_1},\bar{\gg_1}\rbrace=\lbrace 0 \rbrace$ then clearly $\lbrace \gg_1,\gg_1 \rbrace = \lbrace 0 \rbrace$.
\vspace{0.2cm}

Case $b)$: Suppose that $\bar{\gg_1} \cong \bar{V} \oplus \mathcal{Z}(\bar{\gg})$ where $\bar{V}$ is the direct sum of the adjoint representation and a one-dimensional trivial representation of $\bar{\gg_0}\cong\sl(2,\bar{k})$. We recall Proposition 3.13 of \cite{Bourbaki1}:

\begin{pp} \label{bourbaki}
Let $\hh$ be a Lie algebra, let $V$ and $W$ be two finite-dimensional representations of $\hh$ and let $\kt/k$ be an extension field. If $V\otimes \kt$ and $W\otimes \kt$ are isomorphic as representations of $\hh\otimes\kt$, then $V$ and $W$ are isomorphic as representations of $\hh$.
\end{pp}

From this, it follows that there is a direct sum decomposition
$$\gg_1=V_1\oplus V_0$$
where $\gg_0$ acts by the adjoint representation on $V_1$ and trivially on $V_0$. By Lemma \ref{LM general} \ref{LM structure subrep triviale}, $P$ restricted to $V_0$ vanishes identically. If $P|_{V_1\times V_1}\not\equiv \lbrace 0 \rbrace$ then $\bar{\gg_0} \oplus V_1 \otimes \bar{k}$ would be a counter-example to Theorem \ref{classification LSA alg clos} so we deduce that $P|_{V_1\times V_1}\equiv \lbrace 0 \rbrace$.
\vspace{0.2cm}

Since $\mathcal{Z}(\gg)\subseteq V_0$ and since $V_0\otimes \bar{k}$ is characterised as the subspace of $\bar{\gg_1}$ on which $\bar{\gg_0}$ acts trivially, it follows from Lemma \ref{supercentre} that $\mathcal{Z}(\gg)$ is of codimension one in $V_0$. Hence there exists $v\in V_0$ such that
$$V_0={\rm Span}<v>\oplus \mathcal{Z}(\gg)$$
and we have
$$\gg=\gg_0\oplus \left(V_1\oplus {\rm Span}<v> \oplus \mathcal{Z}(\gg)\right).$$
Since $P|_{V_1\times V_1}\equiv \lbrace 0 \rbrace$ and $P(v,v)=0$ we must have $P(v,V_1)\not\equiv\lbrace 0 \rbrace$. However the map $v_1 \mapsto P(v,v_1)$ is a $\gg_0$-equivariant isomorphism from $V_1$ to $\gg_0$ and hence is uniquely determined up to a constant (see Example \ref{ex ss+ss+k}). It is now easy to check that this implies $b)$.
\vspace{0.2cm}

Case $c)$: Now, suppose that $\gg_1 \otimes \bar{k} \cong \bar{V} \oplus \mathcal{Z}(\gg \otimes \bar{k})$ where $\bar{V}$ is the standard representation of $\gg_0\otimes \bar{k}\cong \sl(2,\bar{k})$. By Lemma \ref{supercentre}, we have $\mathcal{Z}(\gg\otimes \bar{k})=\mathcal{Z}(\gg)\otimes \bar{k}$ and so $\gg_1/\mathcal{Z}(\gg)$ is an irreducible two-dimensional representation of $\gg_0$ which implies $\gg_0\cong \sl(2,k)$ by Proposition \ref{non-split 2-dim rep}. By Proposition \ref{bourbaki}, there is a direct sum decomposition
$$\gg_1=V_1\oplus \mathcal{Z}(\gg)$$
where $\gg_0\cong \sl(2,k)$ acts on $V_1$ by the standard representation. By Lemmas \ref{LM general} and \ref{1er LM} \ref{1er LM b} the bracket on $V_1$ is a moment map and so $\gg\cong \osp_k(1|2)\oplus \mathcal{Z}(\gg)$.
\end{proof}
\vspace{0.2cm}

Over a field of positive characteristic, there is the important notion of \textit{restricted} Lie superalgebra (\cite{PETROGRADSKI19921} or \cite{Wang2000}) and then we obtain the following result.
\vspace{0.2cm}

\begin{cor} \label{cor restricted}
Let $k$ be a field of positive characteristic $p$, let $\gg=\gg_0 \oplus \gg_1$ be a finite-dimensional Lie superalgebra over $k$ such that $\gg_0$ is a three-dimensional simple Lie algebra, such that $\lbrace \gg_1,\gg_1 \rbrace \neq \lbrace 0\rbrace$ and let $K$ be the Killing form of $\gg_0$.
\begin{enumerate}[label=\alph*)]
\item The Lie superalgebra $\gg$ is restricted.
\item The $[p]$-map on $\gg_0$ satisfies
\begin{equation} \label{p map}
x^{[p]}=\Big( \frac{K(x,x)}{2} \Big)^{\frac{p-1}{2}}x \qquad \forall x \in \gg_0.
\end{equation}
\item If $\gg_1=(\gg_0\oplus k) \oplus \mathcal{Z}(\gg)$ the $[2p]$-map on $\gg_1$ satisfies
\begin{equation*}
(x+\lambda)^{[2p]}=\lambda^p\Big( \frac{K(x,x)}{2} \Big)^{\frac{p-1}{2}}x \qquad \forall x+\lambda \in \gg_0\oplus k
\end{equation*}
where the right-hand side of this equation is to be understood as an even element of $\gg$.
\item If $\gg\cong\osp_k(1|2)\oplus \mathcal{Z}(\gg)$ the $[2p]$-map on $\gg_1$ is trivial.
\end{enumerate}
\end{cor}
\vspace{0.1cm}

\begin{proof}
\begin{enumerate}[label=\alph*)]
\item The Killing form of $\gg_0$ is non-degenerate (see \cite{MALCOLMSON1992}) and hence $\gg_0$ is a restricted Lie algebra (see page 191 of \cite{jacobson1979lie}). This means by definition that the adjoint representation of $\gg_0$ is a restricted representation. It is well-known that the trivial representation of $\gg_0$ and the standard representation $k^2$ of $\sl(2,k)$ are also restricted representations. Hence, in the sense of V. Petrogradski (\cite{PETROGRADSKI19921}), the Lie superalgebras $\gg_0\oplus(\gg_0\oplus k) \oplus \mathcal{Z}(\gg)$ and $\osp_k(1|2)\oplus \mathcal{Z}(\gg)$ appearing in Theorem \ref{thm final} are restricted Lie superalgebras.
\item Let $x\in \gg_0$ be non-zero and let $y,z\in \gg_0$ be such that $\lbrace x,y,z \rbrace$ is a basis of $\gg_0$. The matrix of $ad(x)$ in the basis $\lbrace x,y,z \rbrace$ is
$$\begin{pmatrix} 
0 & a & d \\
0 & b & e \\
0 & c & f 
\end{pmatrix},$$
where $a,b,c,d,e,f \in k$. Since $\gg_0$ is simple, $[\gg_0,\gg_0]=\gg_0$ and so $Tr(ad(x))=0$. Thus $b=-f$ and the characteristic polynomial $P_x(X)$ of $ad(x)$ is 
$$P_x(X)=-X\Big(X^2+(-b^2-ec)\Big).$$
One can check that 
$$K(x,x)=2(b^2+ec)$$
and then
$$P_x(X)=-X\Big(X^2-\frac{K(x,x)}{2}\Big).$$
By the Cayley–Hamilton theorem we have
$$ad(x)^3=\frac{K(x,x)}{2}ad(x)$$
and so
$$ad(x)^p=\Big(\frac{K(x,x)}{2}\Big)^{\frac{p-1}{2}}ad(x).$$
\item By \cite{BLLS14} we have
\begin{equation} \label{2p map}
x^{[2p]}=\Big( \frac{1}{2}P(x,x) \Big)^{[p]} \qquad \forall x \in \gg_1.
\end{equation}
If $\gg_1=(\gg_0\oplus k) \oplus \mathcal{Z}(\gg)$, since $(y+z)^{[2p]}=y^{[2p]}$ for all $y \in \gg_1$, for all $z\in \mathcal{Z}(\gg)$, we will only consider $y\in \gg_1$ of the form $y=x+\lambda$ where $x\in \gg_0$ and $\lambda \in k$. Using Equation \eqref{2p map} and Definition \ref{def dbl} we have
\begin{align*}
(x+\lambda)^{[2p]}&=\Big( \frac{1}{2}P(x+\lambda,x+\lambda) \Big)^{[p]}\\
&=P(x,\lambda)^{[p]}\\
&=(\lambda x)^{[p]}
\end{align*}
and using Equation \eqref{p map} we obtain
\begin{align*}
(x+\lambda)^{[2p]}&=\Big( \frac{K(\lambda x,\lambda x)}{2} \Big)^{\frac{p-1}{2}}\lambda x\\
&=\lambda^p\Big( \frac{K(x,x)}{2} \Big)^{\frac{p-1}{2}} x.
\end{align*}
\item Suppose that $\gg\cong\osp_k(1|2)\oplus \mathcal{Z}(\gg)$ and let $(a,b) \in k^2$. Using Equation \eqref{2p map} and Example \ref{osp(12)}  we have
\begin{align*}
(a,b)^{[2p]}&=\Big( \frac{1}{2}P((a,b),(a,b)) \Big)^{[p]}\\
&=\begin{pmatrix}
   -ab & a^2 \\
   -b^2 & ab 
\end{pmatrix}^{[p]}.
\end{align*}
Using Equation \eqref{p map} and the well-known fact that
$$K(x,x)=-8\cdot det(x) \qquad \forall x \in \sl(2,k)$$
we obtain
\begin{align*}
(a,b)^{[2p]}&=\Big( -4\cdot det\begin{pmatrix}
   -ab & a^2 \\
   -b^2 & ab 
\end{pmatrix} \Big)^{\frac{p-1}{2}}\begin{pmatrix}
   -ab & a^2 \\
   -b^2 & ab 
\end{pmatrix}\\
&=0.
\end{align*}
\end{enumerate}
\end{proof}
\vspace{0.1cm}

Throughout the paper, we have always assumed the base field $k$ to be of characteristic not two or three. Here are some comments on this assumption.

\begin{itemize}
\item If $k$ is of characteristic three, the definition of a Lie superalgebra $\gg=\gg_0 \oplus \gg_1$ is usually modified by adding the property
\begin{equation} \label{axiom def LSA char 3}
\lbrace x, \lbrace x , x \rbrace \rbrace =0 \qquad \forall x \in \gg_1
\end{equation}
to those of Definition \ref{def LSA}. We will give a counter-example to Theorem \ref{classification LSA alg clos} with this definition of a Lie superalgebra in characteristic three.

\item If $k$ is of characteristic two, the definition of a Lie superalgebra is also usually modified, see \cite{LEB}. In this characteristic there are many reasons for which our proof totally fails. The most important of these, is that $\sl(2,k)$ is nilpotent, not simple. On the other hand, an analogue of $\osp_k(1|2)$ can be defined (see Remark 2.2.1 in \cite{BLLS14}) and the Lie superalgebra of Definition \ref{def dbl} can also be defined. In characteristic two, there are counter-examples to Theorem \ref{classification LSA alg clos} (see below). However, there are still  three-dimensional simple Lie algebras and to the best of our knowledge, there is no counter-example to Theorem \ref{thm final} (with $\osp_k(1|2)\oplus \mathcal{Z}(\gg)$ removed from the statement).
\end{itemize}
\vspace{0.1cm}

\begin{ex} \label{ex char 3}
Suppose that $char(k)=3$ and let $\lbrace E,H,F \rbrace$ be an $\sl(2,k)$-triple.
\vspace{0.2cm}

Let $V$ be a three-dimensional vector space with a basis $\lbrace e_0,e_1,v \rbrace$. We define a representation $\rho : \sl(2,k) \rightarrow {\rm End(V)}$ by:
$$\rho(H)=\begin{pmatrix}
1&0&0 \\
0&-1&0 \\
0&0&0
\end{pmatrix}, ~ \rho(E)=\begin{pmatrix}
0&1&0 \\
0&0&1 \\
0&0&0
\end{pmatrix}, ~ \rho(F)=\begin{pmatrix}
0&0&1 \\
1&0&0 \\
0&0&0
\end{pmatrix}.$$
We define a symmetric bilinear map $P : V\times V \rightarrow \sl(2,k)$ by
\begin{alignat*}{3}
P(e_0,e_0)&=E, \quad P(e_0,e_1)&&=H, \quad P(e_1,e_1)&&=-F,\\
P(v,e_0)&=F, \quad P(v,v)&&=H, \quad P(v,e_1)&&=-E,
\end{alignat*}
which satisfies the identities \eqref{relations P 2}, \eqref{relations P 1} and \eqref{axiom def LSA char 3} and hence we obtain a structure of Lie superalgebra on the vector space $\sl(2,k)\oplus V$.
\vspace{0.2cm}

In fact, the linear subspace $\sl(2,k)\oplus {\rm Span}<e_0,e_1>$ is a Lie subsuperalgebra isomorphic to $\osp_k(1|2)$.
\end{ex}
\vspace{0.1cm}

\begin{ex} \label{ex char 2}
Suppose that $char(k)=2$.
\vspace{0.2cm}

Let $\gg_1$ be a Lie algebra isomorphic to $\sl(2,k)$ and let $\phi : \sl(2,k) \rightarrow \gg_1$ be an isomorphism of Lie algebras. Let $\lbrace E,H,F \rbrace$ be an $\sl(2,k)$-triple, i.e., 
$$[E,F]=H, \quad [H,E]=0, \quad [H,F]=0.$$
The Lie algebra $\sl(2,k)$ acts on $\gg_1$ by the adjoint representation:
$$y(x):=[\phi(y),x] \qquad \forall x\in \gg_1, ~ \forall y\in \sl(2,k).$$
If $x=a\phi(E)+b\phi(H)+c\phi(F)\in \gg_1$, we define:
$$x^2:=acH.$$
Let $y=a'\phi(E)+b'\phi(H)+c'\phi(F)\in \gg_1$ and define
$$P(x,y):=(x+y)^2+x^2+y^2=(ac'+a'c)H.$$
We now show that the super vector space $\gg:=\sl(2,k)\oplus \gg_1$ together with the bracket defined by the Lie bracket on $\sl(2,k)$, the adjoint representation $\sl(2,k) \rightarrow {\rm End(\gg_1)}$ and the bilinear symmetric map $P(\phantom{a},\phantom{a})$ is a Lie superalgebra. According to \cite{LEB} this is equivalent to
$$\lbrace x^2,y\rbrace=\lbrace x, \lbrace x,y\rbrace \rbrace \qquad \forall x \in \gg_1, ~ \forall y \in \gg$$
and it is easy to see that both sides of this equation vanish for all $x \in \gg_1, ~ y \in \gg$.
\end{ex}

\section{Acknowledgements}

The author wants to express his gratitude to Marcus J. Slupinski for his encouragement and his expert advice. The author would also like to thank the referee for his careful reading and his suggestions.

\footnotesize

\bibliographystyle{alpha}
\bibliography{articleLSA3DSLA}

\end{document}